\documentclass{article}
\usepackage[utf8]{inputenc}

\usepackage{amsmath}    
\usepackage{amssymb}    
\usepackage{hyperref}   
\usepackage{graphicx}   
\usepackage{listings}   
\usepackage{amsthm}     
\usepackage{amsfonts}
\usepackage{color}
\usepackage{verbatim}

\usepackage{tikz} 
\usetikzlibrary{shapes.geometric}
\tikzset{gon/.style={name=tmp,regular polygon,regular polygon sides=#1,minimum
size=10pt,inner sep=0pt},
polygon side/.style args={#1--#2}{
insert path={(tmp.corner #1)-- (tmp.corner #2)}}}
\newcommand{\FlagGraph}[3][]{\ifnum#2=2%
\tikz[baseline=(tmp1)]{\node[circle,inner sep=0.7pt,fill] (tmp1) at (0,0){};
\node[#1,circle,inner sep=0.7pt,fill] (tmp2) at (0,10pt){};
\ifx#3\empty%
\else
\draw[#1] (tmp1) -- (tmp2);
\fi}
\else%
\tikz[baseline=(tmp.south)]{\node[#1,gon=#2]{};
\foreach \X in {1,...,#2}{\fill (tmp.corner \X) circle (1pt);}
\draw[#1,polygon side/.list={#3}]}
\fi}

\usepackage{pgf,tikz}
\usepackage{xspace}
\xspaceaddexceptions{]\}}

\usepackage{chngcntr}

\theoremstyle{plain}
\newtheorem{theorem}{Theorem}

\newtheorem{corollary}[theorem]{Corollary}
\newtheorem{lemma}[theorem]{Lemma}

\theoremstyle{definition}
\newtheorem{definition}{Definition}

\DeclareMathOperator*{\inflate}{inflate}

\newcommand{\fix}[1]{\ifmmode{#1}\else{$#1$}\xspace\fi} 

\title{3-Symmetric Graphs}
\author{Sebastian Jeon\\
MIT
\and
Tanya Khovanova\\
MIT}
\date{}

\begin{document}

\maketitle

\begin{abstract}
An intuitive property of a random graph is that its subgraphs should also appear randomly distributed. We consider graphs whose subgraph densities exactly match their expected values. We call graphs with this property for all subgraphs with $k$ vertices to be $k$-symmetric. We discuss some properties and examples of such graphs. We construct 3-symmetric graphs and provide some statistics.
\end{abstract}

\section{Introduction}

The motivation for this paper starts with quasirandom sequences of graphs, permutations and other objects.

Any $k$ elements in a permutation of $n > k$ elements form a subpermutation of length $k$ with respect to relative values. Given a permutation of size $k$, we call its density in a given permutation of size $n$ the number of its occurrences as a subpermutation divided by the total number of subsets of size $k$.

Consider a sequence of permutations of growing length. This sequence is quasirandom if the densities of every permutation of length $k$ tend to $\frac{1}{k!}$ as the the lengths of permutations tend to infinity. 

In 1989 Chung, Graham and Wilson \cite{CGW} showed that if the density of 4-vertex subgraphs in a  large graph is asymptotically the same as in a random graph then this is true for every fixed subgraph. In 2013, Kr\'{a}l' and Pikhurko \cite{KP} proved a similar result for permutations. 

By this result, to show that the sequence is asymptotically random it is enough to show that the densities of 4-vertex subgraphs are random. For sequences that are not asymptotically random, it is interesting to study the densities of 3-vertex subgraphs.

In 2018 Khovanova and Zhang \cite{KZ} studied finite permutations that exhibit properties of random permutation. Namely, a permutation is called $k$-symmetric if every subpermutation of length $k$ has the same density. They showed that such permutations can only exist for lengths satisfying certain divisibility constraints, and constructed 3-symmetric permutations of small lengths. They also conjectured that there exists a 3-symmetric permutation of each admissible length.

In this paper we study finite graphs that exhibit properties of random graphs. We introduce the notion of a $k$-symmetric graph, which is parallel to the definition of a $k$-symmetric permutation: a graph is $k$-symmetric if densities of all subgraphs with $k$ vertices equals the expected density of these subgraphs in a random graph.

We provide some observations about $k$-symmetric graphs for any $k$. There is a natural constraint for the orders, that is the number of vertices, of $k$-symmetric graphs related to the divisibilities of binomial coefficients. We show that if such graphs exist then the smallest order (only considering the divisibility constraint) is a power of 2 whose exponent is $\binom{k}{2}+\nu_2(k)$. For example, the smallest possible order for 4-symmetric graphs is 256. As this is a very large number of vertices, we concentrate on 3-symmetric graphs in the rest of the paper. However, we show one more general result on $k$-symmetric graphs, that $k$-symmetricity implies $j$-symmetricity for $j < k$. 

For 3-symmetric graphs, the divisibility constraint implies that the smallest orders where they can exist are 8, 16, and 17. We find 3-symmetric graphs in all of these orders. 3-symmetric graphs were defined by Khovanova in \cite{K} and later Perkinson \cite{P} calculated that the number of such graphs of order 8 is 74. The next available order is 16. We found some 3-symmetric graphs, but working with graphs of this orders is non-trivial due to computational constraints. Thus rather than calculating the exact number of 3-symmetric graphs of order 16, we provide some statistics.

Khovanova and Zhang \cite{KZ} used an inflation procedure on permutations to build 3-symmetric permutations of larger sizes. This motivated us to study inflations of 3-symmetric graphs. For graphs, inflations do not work the same way as in permutations in that it does not preserve 3-symmetricity. But we show that inflating 3-symmetric graphs create 3-symmetric graphs asymptotically, i.e. the densities tend to their expected values.

In Section~\ref{sec:pr}, we give formal definitions of $k$-symmetric graphs, the objects of study. 

In Section~\ref{sec:tr}, we consider some general results on $k$ symmetric graphs. We formalize the divisibility constraint on the orders of $k$-symmetric graphs to be the condition $\nu_2(\binom{n}{k})\ge \binom{k}{2}$. Moreover, we prove that $k$-symmetric graphs are also $j$-symmetric for each $j$ less than $k$.

In Section~\ref{sec:di}, we define the inflation procedure and we provide the formulae for how densities behave under the inflation procedure. We show that the inflation of two 2-symmetric graphs is 2-symmetric, but that the analogous result for 3-symmetric graphs is not true. We prove that the inflation of a 3-symmetric graph $G$ into a 3-symmetric graph $H$ tends to be 3-symmetric when the order of $G$ tends to infinity.

In Section~\ref{sec:cr}, we provide examples of computer-generated 3-symmetric graphs of orders 16 and 17. We also give statistics on maximum clique and degree sizes for randomly sampled 3-symmetric graphs with 16 vertices.

\section{Preliminaries}\label{sec:pr}

\subsection{Defining 
\textit{k}-symmetric Graphs}

We want to translate the notion of $k$-symmetricity from permutations to graphs. $k$-symmetric permutations were introduced in \cite{KZ} and $k$-symmetric graphs in \cite{K}.
A $k$-\textit{symmetric} permutation is such that the densities of all permutations of length $k$ in it are the same. In particular, a 2-symmetric permutation has the same number of inversions and non-inversions. 

How do we create an analogous definition for graphs? We call a graph 2-symmetric if it has the same number of edges as non-edges. 

The above definition of a 2-symmetric graph is difficult to generalize. So we rephrase: a graph $G$ is 2-symmetric, if the density of any subgraph $H$ with 2 vertices in $G$ is the same as the expected density of $H$ in a random graph where the probability of an edge equals 1/2. This definition is easy to generalize: 

A graph $G$ is $k$-\textit{symmetric}, if the density of any subgraph $H$ with $k$ vertices in $G$ is the same as the expected density of $H$ in a random graph where the probability of an edge equals 1/2.

For the rest of the paper, we define $t(H,G)$ to be the density of graph $H$ in graph $G$.

\subsection{2-symmetric graphs}

We denote the density of edges in $G$ as $t\left(\FlagGraph{2}{1--2},G\right)$ and the density of non-edges as $t\left(\FlagGraph{2}{},G\right)$. By definition, a graph is 2-symmetric if and only if
\[t\left(\FlagGraph{2}{1--2},G\right) = t\left(\FlagGraph{2}{},G\right) =\frac{1}{2}.\]
The graphs with 0 or 1 vertices are trivially 2-symmetric. 2-symmetric graphs with 2 or 3 vertices do not exist. The simplest non-trivial examples are graphs with 4 vertices and three edges. There are 3 such graphs: a path, a star and complete graph $K_3$ with an isolated vertex. These graphs are depicted in Figure ~\ref{fig:2-sym4}. Note that the last two graphs are complements of each other and the first graph is self-complementary.
\begin{figure}[ht]
\centering
\scalebox{0.2} {
\includegraphics{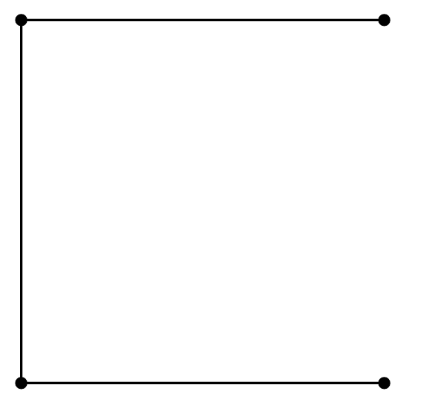} \hspace{4cm} \includegraphics{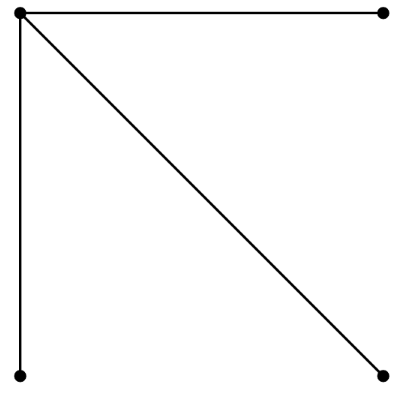} \hspace{4cm} \includegraphics{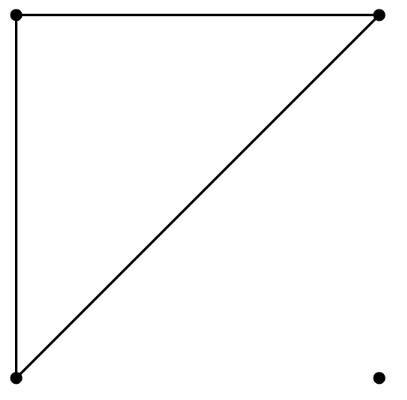}
}
\caption{The 2-symmetric graphs of order 4}
\label{fig:2-sym4}
\end{figure}
The number of 2-symmetric graphs with $n$ vertices is provided by sequence A218113 in the Online Encyclopedia of Integer Sequences \cite{OEIS}. The sequence, with the first index 1, starts as follows:
\[1,\ 0,\ 0,\ 3,\ 6,\ 0,\ 0,\ 1646,\ 34040,\ 0,\ 0,\ 16006173014,\ 4525920859198,\ \ldots .\]

\subsection{3-symmetric Graphs}

We denote the densities of subgraphs with 3 vertices in $G$ in the following manner. The density of the complete graph $K_3$ in $G$ as $t(\FlagGraph{3}{1--2,2--3,3--1}, G)$, the density of the path graph $P_3$ as $t(\FlagGraph{3}{1--2,1--3}, G)$, the density of the single edge with an isolated vertex as $t(\FlagGraph{3}{2--3}, G)$, and the density of the independent set on 3 vertices as $t(\FlagGraph{3}{}, G)$.

From the definition of 3-symmetric graphs, the densities of all four possible subgraphs with 3 vertices in a 3-symmetric graph should be as follows.

\begin{itemize}
    \item A complete graph with 3 vertices: $t(\FlagGraph{3}{1--2,2--3,3--1}, G) = \frac{1}{8}$,
    \item A path graph with 3 vertices: $t(\FlagGraph{3}{1--2,1--3}, G) = \frac{3}{8}$,
    \item A graph with 3 vertices and only one edge: $t(\FlagGraph{3}{2--3}, G)=\frac{3}{8}$,
    \item A graph with 3 isolated vertices: $t(\FlagGraph{3}{}, G)=\frac{1}{8}$.
\end{itemize}

The graphs with 0, 1 and 2 vertices are trivially 3-symmetric. As we show in the next section 3-symmetric graphs with 3 to 7 vertices do not exist. 
The first non-trivial case is $n = 8$. Figure~\ref{fig:3sym8} shows two 3-symmetric graphs. The first one is a wheel, and the second one is its complement.

\begin{figure}[htp!]
\centering
\includegraphics[scale=0.4]{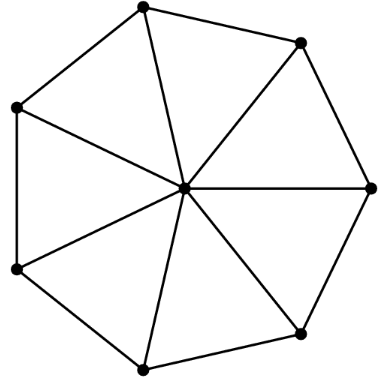} \quad \quad
\includegraphics[scale=0.4]{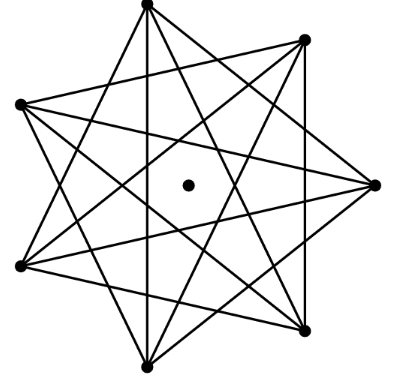}
\caption{Examples of 3-symmetric graphs with 8 vertices}
\label{fig:3sym8}
\end{figure}

Perkinson \cite{P} calculated that there are $74$ 3-symmetric graphs of order 8.

\section{\textit{k}-symmetric graphs}
\label{sec:tr}

\subsection{The restriction on the number of vertices}

If $n < k$, then the densities of all subgraphs of order $k$ are zero and the same. Such graphs are $k$-symmetric.

Suppose the number of vertices $n > k$. For a graph $G$ with $n$ vertices to be $k$-symmetric, we need $\binom{n}{k}$ to be divisible by $2^{\binom{k}{2}}$. This is because the density of a complete graph with $k$ vertices has to be $\frac{1}{2^{\binom{k}{2}}}$, which means the number of $k$-subgraphs of $G$ must be a multiple of that denominator. 

We call a number $n> k$ \textit{k-admissible} if $\binom{n}{k}$ is divisible by $2^{\binom{k}{2}}$. If $n> k$ is not $k$-admissible, then a $k$-symmetric graph with $n$ vertices does not exist.

In particular, for 2-symmetric graphs, $\binom{n}{2}$ must be even, which is equivalent to $n\equiv 0,1\bmod 4$.

By the above discussion, for a 3-symmetric graph, the number of vertices $n$ needs to be such that $\binom{n}{3}$ is divisible by 8. The sequence of numbers $n$ such that $\binom{n}{3}$ is divisible by 8 starts as:
\[1,\ 2,\ 8,\ 10,\ 16,\, 17,\ 18,\ 24,\ 26,\ 32,\ 33,\ 34,\ 40,\ 42,\ 48,\ 49,\ 50,\ 56,\ \ldots\]
These numbers are 0, 1, 2, 8, and 10 $\mod$ 16. This sequence is now sequence A329952 on the OEIS.



For 4-symmetric graphs we need $\binom{n}{4}$ to be divisible by $2^6$. So the minimum 4-admissible $n$ is $n=256$.

The smallest $k$-admissible numbers, starting from $k=2$ are given by the sequence 
\[4,\ 8,\ 256,\ 1024,\ 65536,\ 2097152\ \ldots.\]

This is the smallest $n$ such that $2^{k(k-1)/2}$ divides $\binom{n}{k}$. This sequence corresponds to the following powers of 2: 
\[2,\ 3,\ 8,\ 10,\ 16,\ 21,\ 31,\ \ldots.\]


This is now sequence A326714 in the OEIS \cite{OEIS}

We will prove that, as the sequence suggests, that the smallest $k$-admissible number is a power of 2 for all $k$. In what follows we denote 2-adic valuation of $n$ as $\nu_2(n)$.

\begin{lemma}
Given integers $k$ and $m$, such that $2^{m + \nu_2(k)} > k$, the smallest integer $n$ such that $\binom{n}{k}$ is divisible by $2^m$ is $2^{m+ \nu_2(k)}$.
\end{lemma}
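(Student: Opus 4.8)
The plan is to compute $\nu_2\!\binom{n}{k}$ via Kummer's theorem, which states that $\nu_2\!\binom{n}{k}$ equals the number of carries occurring when $k$ and $n-k$ are added in base $2$; equivalently $\nu_2\!\binom{n}{k} = s_2(k) + s_2(n-k) - s_2(n)$, where $s_2$ denotes the binary digit sum. I would write $c = \nu_2(k)$ and $k = 2^c q$ with $q$ odd, noting that the hypothesis $2^{m+c} > k$ is precisely the statement $q < 2^m$. The proof then splits into a minimality bound (no $n$ below $2^{m+c}$ works) and an achievability computation (that $n = 2^{m+c}$ works).

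For minimality, I would fix any $n$ with $k \le n < 2^{m+c}$ and bound the number of carries in the base-$2$ addition $k + (n-k) = n$. Since $k$ has zero bits in positions $0, \ldots, c-1$, an easy induction on the carry chain (started from carry $0$) shows that no carry is produced in those positions. At the top end, because $k < 2^{m+c}$ and $n-k < 2^{m+c}$, both summands vanish in position $m+c$ and above; a carry out of position $m+c-1$ would therefore force a $1$ in position $m+c$ of the sum, contradicting $n < 2^{m+c}$. Hence every carry occurs in one of the positions $c, c+1, \ldots, m+c-2$, of which there are only $m-1$, so $\nu_2\!\binom{n}{k} \le m-1 < m$ and $2^m \nmid \binom{n}{k}$. (Here $\binom{n}{k} \ge 1$ since $n \ge k$; the range $n < k$ gives the trivial $\binom{n}{k}=0$ and is excluded, as the paper takes $n > k$.)

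For achievability, I would evaluate the digit-sum formula at $n = 2^{m+c}$. Here $s_2(n) = 1$ and $s_2(k) = s_2(q)$, while $n - k = 2^c(2^m - q)$ with $2^m - q$ odd (using $q < 2^m$), so $s_2(n-k) = s_2(2^m - q)$. Writing $2^m - q = (2^m - 1) - (q-1)$ and using $0 \le q-1 < 2^m$, subtraction from the all-ones string $2^m-1$ is bitwise complementation, giving $s_2(2^m - q) = m - s_2(q-1)$. Finally, since $q$ is odd, passing from $q-1$ to $q$ only flips the last bit, so $s_2(q) = s_2(q-1)+1$. Substituting yields $\nu_2\!\binom{2^{m+c}}{k} = s_2(q) + (m - s_2(q-1)) - 1 = m$, and combined with the minimality bound this identifies $2^{m+c}$ as the smallest admissible $n$.

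I expect the minimality step to be the main obstacle: the achievability is a direct digit-sum calculation, but pinning down exactly which bit positions can carry — and in particular ruling out a carry out of position $m+c-1$ — is where the hypothesis $2^{m+c} > k$ and the value $c = \nu_2(k)$ genuinely enter, and it is what makes the bound of $m-1$ carries tight.
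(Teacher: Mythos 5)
Your proof is correct and takes essentially the same route as the paper's: both rest on Kummer's theorem, ruling out carries in the bottom $\nu_2(k)$ positions and out of the top position to show no $n < 2^{m+\nu_2(k)}$ can produce $m$ carries. Your achievability step, carried out via the digit-sum identity $\nu_2\binom{n}{k} = s_2(k)+s_2(n-k)-s_2(n)$, is a careful verification of what the paper merely asserts (``the number of carries is exactly $m$''), so it fills in detail rather than diverging in method.
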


\begin{proof}
The largest power of 2 that divides $\binom{n}{k}$ is the number of carries when summing up $n-k$ and $k$ in base 2. This number must be less than the number of digits of $n$, which we denote by $d$. Moreover, the last $\nu_2(k)$ digits of $k$ are zeros and do not contribute to the number of carries. Thus, the largest power of 2 that divides $\binom{n}{k}$ is less than $d- \nu_2(k)$. Hence, if $n < 2^{m+ \nu_2(k)}$, the largest power of 2 that divides $\binom{n}{k}$ is less than $m$.

On the other hand, if $n = 2^{m+ \nu_2(k)}$, the number of carries is exactly $m$.
\end{proof}

\begin{corollary}
The smallest $k$-admissible number is $2^{\binom{k}{2}+\nu_2(k)}$.
\end{corollary}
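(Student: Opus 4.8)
The plan is to obtain the corollary as a direct instance of the preceding lemma, taking $m = \binom{k}{2}$. With this choice the divisibility condition ``$2^m \mid \binom{n}{k}$'' appearing in the lemma becomes exactly the condition that defines $k$-admissibility, and the lemma's conclusion identifies the smallest such $n$ as $2^{m + \nu_2(k)} = 2^{\binom{k}{2} + \nu_2(k)}$, which is precisely the claimed value. So the bulk of the work has already been done, and what remains is to check that the lemma is applicable and that its output matches the admissibility definition.

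The one hypothesis I must discharge before invoking the lemma is its standing assumption $2^{m+\nu_2(k)} > k$, i.e. $2^{\binom{k}{2}+\nu_2(k)} > k$. Since $\nu_2(k) \geq 0$, it essentially suffices to verify $2^{\binom{k}{2}} \geq k$, with the extra factor $2^{\nu_2(k)}$ supplying strictness in the boundary case. Writing $\binom{k}{2} = \tfrac{k(k-1)}{2}$, for $k \geq 3$ we have $\binom{k}{2} \geq 3$ and the exponential $2^{k(k-1)/2}$ dominates $k$ comfortably; for $k = 2$ one has $2^{\binom{2}{2}} = 2 = k$, and the factor $2^{\nu_2(2)} = 2$ upgrades this to $4 > 2$. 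Thus the inequality holds for every $k \geq 2$, which is the relevant range for $k$-symmetric graphs. I expect this inequality to be the only genuine obstacle, and it is a mild one.

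A final bookkeeping point concerns the fact that $k$-admissibility requires $n > k$, whereas the lemma speaks of the smallest integer $n$ (necessarily with $n \geq k$, since its Kummer-theoretic proof presumes $n - k \geq 0$). I would observe that $n = k$ gives $\binom{k}{k} = 1$, which is not divisible by $2^{\binom{k}{2}}$ once $k \geq 2$, so $n = k$ is never admissible; hence the smallest $n \geq k$ satisfying the divisibility coincides with the smallest $k$-admissible number. Combined with the strict inequality $2^{\binom{k}{2}+\nu_2(k)} > k$ established above, the value produced by the lemma is genuinely larger than $k$ and is therefore the smallest $k$-admissible number, completing the argument.
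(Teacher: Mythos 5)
Your proposal is correct and is exactly the paper's (implicit) argument: the corollary is stated there without proof precisely because it is the lemma instantiated at $m = \binom{k}{2}$. Your additional checks---that $2^{\binom{k}{2}+\nu_2(k)} > k$ holds for all $k \ge 2$, and that $n = k$ itself is never admissible since $\binom{k}{k} = 1$---are details the paper leaves unstated, and they are handled correctly.
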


\subsection{$k$-symmetricity implies $j$-symmetricity for $j <k$}
In this section, we prove that a $k$-symmetric graph must be $j$-symmetric for $j<k$. This preservation of symmetricity property suggests that the definition of symmetricity is natural.

Recall that $t(H,G)$ is the density of the graph $H$ in the graph $G$.

\begin{theorem}
A non-trivial $k$-symmetric graph is $j$-symmetric for $j<k$.
\end{theorem}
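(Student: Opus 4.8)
The plan is to reformulate $k$-symmetricity probabilistically and then transfer it downward to $j$ vertices using the standard subsampling identity for induced densities. For a fixed isomorphism type $H$ on $k$ vertices, let $p_k(H)$ denote the probability that the random graph on $k$ vertices with edge probability $1/2$ is isomorphic to $H$; explicitly $p_k(H) = N(H)/2^{\binom{k}{2}}$, where $N(H)$ is the number of labeled graphs on $\{1,\dots,k\}$ isomorphic to $H$. With this notation, $G$ is $k$-symmetric exactly when $t(H,G) = p_k(H)$ for every such $H$, and being $j$-symmetric means $t(H',G) = p_j(H')$ for every $j$-vertex type $H'$. So the goal becomes: assuming $t(H,G)=p_k(H)$ for all $k$-vertex $H$, deduce $t(H',G)=p_j(H')$ for all $j$-vertex $H'$ with $j<k$.

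The engine of the proof is the subsampling identity
\[ t(H', G) = \sum_{H} t(H, G)\, t(H', H), \]
where the sum ranges over all isomorphism types $H$ on $k$ vertices and $t(H',H)$ is the induced density of $H'$ in $H$. I would establish it by a double count: sampling a uniformly random $j$-subset $S$ of $V(G)$ is equivalent to first sampling a uniformly random $k$-subset $T$ and then a uniformly random $j$-subset $S \subseteq T$, since each $j$-subset is contained in exactly $\binom{n-j}{k-j}$ of the $k$-subsets. Averaging the indicator that $G[S] \cong H'$ over this two-stage process and grouping the sets $T$ according to the isomorphism type $H$ of $G[T]$ yields the identity. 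This is the only place non-triviality is needed: we require $n \geq k$ so that $k$-subsets exist, which holds because a non-trivial $k$-symmetric graph has at least $k$ vertices.

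Finally I would record the consistency relation $\sum_{H} p_k(H)\, t(H', H) = p_j(H')$, which is the analogue of the subsampling identity for the random graph: sampling the random graph on $k$ vertices and then restricting to a uniformly random $j$-subset again produces the random graph on $j$ vertices, because the edges among any fixed $j$ vertices are independent fair coins. Substituting $t(H,G)=p_k(H)$ into the subsampling identity and applying this relation gives $t(H',G)=p_j(H')$, which is precisely $j$-symmetricity. Note the argument delivers $j$-symmetricity simultaneously for all $j<k$, so no induction is required. I expect the main obstacle to be purely bookkeeping: keeping the labeled-versus-unlabeled counting straight in $N(H)$ and in $t(H',H)$ while verifying the double count, rather than any conceptual difficulty. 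Once the subsampling identity is proved, the conclusion is immediate.
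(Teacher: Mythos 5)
Your proof is correct and takes essentially the same approach as the paper: both rest on the subsampling identity (which the paper writes in expectation form as $t(\widetilde{H},G)=\mathbb{E}_{H\subseteq G,\,|H|=k}[t(\widetilde{H},H)]$) combined with the observation that the random graph with edge probability $1/2$, restricted to a subset of its vertices, is again such a random graph. The only difference is cosmetic: the paper steps from $k$ down to $k-1$ and then inducts, while you pass directly from $k$ to an arbitrary $j<k$, which makes the induction unnecessary.
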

\begin{proof}
It suffices to show that a nontrivial $k$-symmetric graph is $(k-1)$-symmet\-ric, as then induction would finish the rest. 

Let $G$ be $k$-symmetric, and now consider a particular graph $\widetilde{H}$ with $k-1$ vertices. We calculate the density of $\widetilde{H}$ in $G$ by calculating its density in subgraphs of $G$ of order $k$, as follows. 
\[t(\widetilde{H},G)=\mathbb{E}_{H\subseteq G, |H|=k}[t(\widetilde{H},H)].\]
Since $G$ is $k$-symmetric, we can instead take the expectation over the uniform distribution of $H$ over graphs on $k$ vertices. Thus each subgraph of $H$ of order $k-1$ is also uniformly distributed over graphs on $k-1$ vertices, meaning that $t(\widetilde{H},G)$ equals the probability that $\widetilde{H}$ is isomorphic to a uniformly chosen random graph on $k-1$ vertices. As this is true for all $\widetilde{H}$, we are done.

By induction, it follows that a $k$-symmetric graph is $m$-symmetric for $m<k.$
\end{proof}

\begin{corollary}
A $3$-symmetric graph is $2$-symmetric.
\end{corollary}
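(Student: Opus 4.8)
The plan is to obtain the corollary as the special case $k=3$, $j=2$ of the preceding Theorem, so that essentially no new work is required. The Theorem asserts that any non-trivial $k$-symmetric graph is $j$-symmetric for every $j<k$; setting $k=3$ and $j=2$ immediately gives that a non-trivial $3$-symmetric graph is $2$-symmetric. The only point worth flagging is the non-triviality hypothesis: a graph on fewer than $3$ vertices is vacuously $3$-symmetric, and while the $0$- and $1$-vertex graphs are also trivially $2$-symmetric, a $2$-vertex graph is not, since no $2$-vertex graph can have edge density exactly $1/2$. Restricting to the non-trivial range $n\ge 3$ sidesteps this boundary case and matches the hypothesis of the Theorem.

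For a self-contained check that does not route through the full Theorem, I would verify the single required identity directly. For any graph $G$ on $n\ge 3$ vertices, the edge density equals the average of the edge density over all $3$-vertex induced subgraphs:
\[
t\left(\FlagGraph{2}{1--2},G\right)=\E_{H\subseteq G,\,|H|=3}\left[\,t\left(\FlagGraph{2}{1--2},H\right)\right].
\]
One then records the edge density inside each of the four $3$-vertex graphs, namely $1$ for $K_3$, $2/3$ for $P_3$, $1/3$ for the single edge, and $0$ for the independent set, and substitutes the $3$-symmetric densities $1/8,\,3/8,\,3/8,\,1/8$. The weighted sum collapses to $\tfrac18\cdot 1+\tfrac38\cdot\tfrac23+\tfrac38\cdot\tfrac13+\tfrac18\cdot 0=\tfrac12$, so the edge density is $1/2$ and $G$ is $2$-symmetric.

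There is no genuine obstacle: the statement is a one-line consequence of the Theorem. The only care needed is the bookkeeping in the direct computation, namely that a $3$-vertex graph has $\binom{3}{2}=3$ vertex pairs, so the per-subgraph edge densities are fractions with denominator $3$, together with remembering to exclude the degenerate $2$-vertex graph from the claim.
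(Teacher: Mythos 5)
Your proposal is correct and matches the paper's approach: the paper states this corollary without proof precisely because it is the special case $k=3$, $j=2$ of the preceding theorem, which is your primary route, and your direct averaging check (edge density as the expectation of edge density over $3$-vertex subgraphs, giving $\tfrac18\cdot 1+\tfrac38\cdot\tfrac23+\tfrac38\cdot\tfrac13+\tfrac18\cdot 0=\tfrac12$) is just the theorem's own argument specialized to this case. Your remark about the $2$-vertex graph correctly explains why the non-triviality hypothesis is needed under the paper's conventions.
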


As 3-symmetric graphs are 2-symmetric, they can only exist for $n$ such that $\binom{n}{3}$ is divisible by 8 and $\binom{n}{2}$ is divisible by 2. Thus we keep the numbers from the previous sequence that are  $0, 1, \bmod\ 4$:
\[1,\ 8,\ 16,\ 17,\ 24,\ 32,\ 33,\ \ldots .\]

This sequence contains the numbers that are 0, 1, and 8 modulo 16.

Notice that if a number is $k$-admissible, it does not have to be $j$-admissible by $j< k$. For example, 10 is 3-admissible, but not 2-admissible. On the other hand, the smallest $k$-admissible number is $j$-admissible for any $j < k$. This is because $\binom{k}{2}+\nu_2(k)-\left(\binom{k-1}{2}+\nu_2(k-1)\right)=k+\nu_2(k)-\nu_2(k-1)\ge k-\log_2(k-1)>0$ for all $k$, so the sequence of smallest $k$-admissible numbers is strictly increasing.

\subsection{Self-complementary graphs}

\begin{definition}
A graph is \textit{self-complementary} if it is isomorphic to its complement (the graph formed by flipping each of its edges). 
\end{definition}

The density for a graph $H$ in a self-complementary graph is equal to the density of its complement $H'$. That means for a self-complementary graph $G$ to be $k$-symmetric, it is enough for the densities of $k$-subgraphs with not more than $\frac{k(k-1)}{2}$ edges to provide the correct densities.

Applying this to the case where $H$ is an edge, we see that self-comeplementa\-ry graphs are 2-symmetric. Also, the densities of a 3-clique and a 3-vertex graph with all isolated vertices are the same.  Also, a self-complementary graph has the same density for a 3-vertex graph with 1 edge and 2 edges. Thus, a self-complementary graph is 3-symmetric if and only if the density of the clique is 1/8.

There are 10 self-complementary graphs of order 8 \cite{HP}. Unfortunately, none of them are 3-symmetric. But self-complementary graphs might provide examples of 3-symmetric graphs of higher orders.

Self-complementary graphs exist in the same orders as 2-symmetric graphs. Therefore, they exist in all orders where a $k$-symmetric graph might exist. 

The sequence A000171 in the OEIS \cite{OEIS} describes the number of self-comple\-mentary graphs with $n$ nodes. It starts as: 
\[1,\ 0,\ 0,\ 1, 2,\ 0,\ 0,\ 10,\ 36,\ 0,\ 0,\ 720,\ 5600,\ 0, \ldots.\] 

\section{Densities and Inflation}\label{sec:di}

We now discuss possible approaches for constructing larger 3-symmetric graphs. One such approach is to take two 3-symmetric graphs and combine them to obtain a larger one. One possible mechanism for doing so is known as the lexicographic product of graphs introduced by Hausdorff in 1914 \cite{H}. However, due to analogous notions introduced in \cite{KZ}, we will refer to this operation as an inflation.
For graphs $G$ and $H,$ define the \textit{inflation} of $G$ with respect to $H$ as the graph with $|G||H|$ vertices where:
\begin{itemize}
    \item Each vertex in $G$ becomes a graph isomorphic to $H$, and
    \item If $H_i$ and $H_j$ are the graphs that correspond to adjacent nodes $i$ and $j$ in $G,$ each vertex in $H_i$ becomes adjacent to each vertex in $H_j.$
\end{itemize}
We denote the inflation of $G$ with respect to $H$ as $\inflate(G,H)$.

Figure~\ref{fig:inf} provides an example of inflation, where $H$ is a star graph $S_4$ and $G$ is a path graph $P_4$.

\begin{figure}[ht]
    \centering
    \scalebox{0.7} {
    \includegraphics{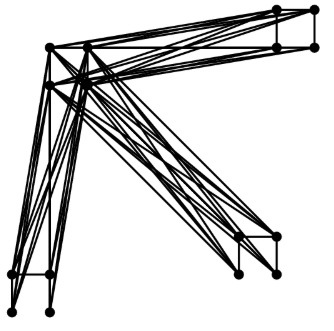}
    }
        \caption{The graph $\inflate\left(\FlagGraph{4}{2--1,2--3,2--4},\FlagGraph{4}{4--1,1--2,2--3}\right)$.}
        \label{fig:inf}
\end{figure}

\subsection{Densities in inflated graphs}
The number of edges in an inflation graph $\inflate(G,H)$ can be expressed through the number of vertices and edges in $G$ and $H$. The formula is well-known. Translated to densities we get the following lemma which describes how the density of edges behaves with respect to an inflation.

\begin{lemma}
Given graphs $H$ and $G$, the density of edges in $\inflate(G,H)$ is given by the following formula: \[t\left(\FlagGraph{2}{1--2},\inflate(G,H)\right)=\frac{|G|\tbinom{|H|}{2}t\left(\FlagGraph{2}{1--2},H\right)+\tbinom{|G|}{2}t\left(\FlagGraph{2}{1--2},G\right)|H|^2}{\tbinom{|G||H|}{2}}.\]
\label{edgeDensity}
\end{lemma}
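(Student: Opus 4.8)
The plan is to count edges in $\inflate(G,H)$ directly and then divide by the total number of vertex-pairs, $\tbinom{|G||H|}{2}$, to obtain the density. I would classify every edge of the inflation into exactly two types according to the construction. First, there are the \emph{internal} edges: each of the $|G|$ copies of $H$ contributes a copy of $H$'s edge set, so these contribute $|G|\cdot e(H)$ edges, where $e(H)=\tbinom{|H|}{2}t\left(\FlagGraph{2}{1--2},H\right)$ is the number of edges of $H$. Second, there are the \emph{cross} edges: for each edge $ij$ of $G$, the two corresponding blocks $H_i$ and $H_j$ are joined completely, contributing $|H|^2$ edges per edge of $G$, for a total of $e(G)\cdot |H|^2$, where $e(G)=\tbinom{|G|}{2}t\left(\FlagGraph{2}{1--2},G\right)$.

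The key observation that makes this a clean partition is that every pair of distinct vertices in the inflation either lies in the same block or in two distinct blocks, and in the latter case the two blocks are either completely joined (if the underlying vertices of $G$ are adjacent) or completely non-adjacent (otherwise). Thus there is no overcounting and no third case: internal edges live inside a single block, cross edges live between two fully-joined blocks, and the rest are non-edges. Summing the two contributions gives
\[
e(\inflate(G,H)) = |G|\tbinom{|H|}{2}t\left(\FlagGraph{2}{1--2},H\right) + \tbinom{|G|}{2}t\left(\FlagGraph{2}{1--2},G\right)|H|^2.
\]

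Finally I would convert this edge count to a density. Since $\inflate(G,H)$ has $|G||H|$ vertices, its edge density is $e(\inflate(G,H))$ divided by $\tbinom{|G||H|}{2}$, which is exactly the claimed formula. No further simplification is needed, since the statement is phrased with the denominator left as $\tbinom{|G||H|}{2}$.

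I expect no serious obstacle here: the result reduces to a careful but routine edge-counting argument, and the author even notes the underlying edge-count formula is well known. The only point requiring a little care is being explicit that the two edge-types exhaust all edges and are disjoint, so that the counts add without correction; once the bijective/partition argument is stated cleanly, the density formula follows immediately by dividing through by $\tbinom{|G||H|}{2}$.
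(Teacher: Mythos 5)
Your proposal is correct and follows essentially the same route as the paper's own proof: partition the edges of $\inflate(G,H)$ into those inside a single copy of $H$ (contributing $|G|\tbinom{|H|}{2}t\left(\FlagGraph{2}{1--2},H\right)$) and those between two copies corresponding to adjacent vertices of $G$ (contributing $\tbinom{|G|}{2}t\left(\FlagGraph{2}{1--2},G\right)|H|^2$), then divide by $\tbinom{|G||H|}{2}$. Your explicit remark that the two cases are disjoint and exhaustive is a nice touch of rigor that the paper leaves implicit, but the argument is the same.
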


We can express the density of a particular graph $G'$ with 3 vertices in $\inflate(G,H)$ through the density of $G'$ in $G$ and $H$ and the densities of edges in $G$ and $H$.

\begin{lemma} Given graphs $H$ and $G$, the density of $K_3$ in $\inflate(G,H)$ is given by the following formula:
\begin{multline*}
t\left(\FlagGraph{3}{1--2,1--3,2--3}, \inflate(G,H)\right)= \\
\frac{|G|t\left(\FlagGraph{3}{1--2,1--3,2--3}, H\right)\tbinom{|H|}{3}+2\tbinom{|G|}{2}t\left(\FlagGraph{2}{1--2},G\right)\tbinom{|H|}{2}|H|t\left(\FlagGraph{2}{1--2},H\right)+\tbinom{|G|}{3}t\left(\FlagGraph{3}{1--2,1--3,2--3},G\right)|H|^3}{\tbinom{|G||H|}{3}}.
\end{multline*}
\end{lemma}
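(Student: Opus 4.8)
The plan is to count the triangles (copies of $K_3$) in $\inflate(G,H)$ directly and then divide by the total number of vertex triples $\binom{|G||H|}{3}$ to obtain the density. Label the vertices of $G$ as $v_1,\dots,v_{|G|}$ and let $H_1,\dots,H_{|G|}$ be the corresponding copies of $H$ in $\inflate(G,H)$, so that the vertex set of the inflation is partitioned into these $|G|$ blocks. The structural fact I would use repeatedly is that two vertices lying in distinct blocks $H_i$ and $H_j$ are adjacent in the inflation if and only if $v_i$ and $v_j$ are adjacent in $G$, independently of which two vertices are chosen, whereas two vertices in the same block are adjacent if and only if the corresponding vertices of $H$ are adjacent.

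Next I would split the count of triangles according to how the three chosen vertices are distributed among the blocks, which yields exactly three mutually exclusive and exhaustive cases corresponding to the distributions $3$--$0$--$0$, $2$--$1$--$0$, and $1$--$1$--$1$. In case (a) all three vertices lie in a single block $H_i$: such a triple is a triangle iff it is a triangle of $H$, contributing $|G|\binom{|H|}{3}t(\FlagGraph{3}{1--2,1--3,2--3},H)$ in total. In case (b) exactly two vertices lie in one block and the third in another: the two same-block vertices must form an edge of $H$, and the third vertex is automatically joined to both precisely when the two blocks correspond to an edge of $G$; here I must count both assignments of which block holds the two vertices, producing the factor of $2$ in the term $2\binom{|G|}{2}t(\FlagGraph{2}{1--2},G)\binom{|H|}{2}|H|t(\FlagGraph{2}{1--2},H)$. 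In case (c) the three vertices lie in three distinct blocks: by the structural fact the triple is a triangle iff the three blocks correspond to a triangle of $G$, and any choice of one vertex per block works, giving $\binom{|G|}{3}t(\FlagGraph{3}{1--2,1--3,2--3},G)|H|^3$.

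Summing the three contributions produces the numerator, and dividing by $\binom{|G||H|}{3}$ yields the claimed formula. The only real subtlety, and the step I would check most carefully, is case (b): one must verify that the third vertex is adjacent to \emph{both} endpoints of the in-block edge, which follows from the inflation joining every vertex of $H_i$ to every vertex of $H_j$ whenever $v_i$ and $v_j$ are adjacent, and one must avoid double counting while still capturing both roles of the two blocks, which is exactly what the factor $2$ encodes. The remaining work of rewriting edge counts as $\binom{|H|}{2}t(\FlagGraph{2}{1--2},H)$ and triangle counts as $\binom{|H|}{3}t(\FlagGraph{3}{1--2,1--3,2--3},H)$, together with the analogous expressions for $G$, is routine bookkeeping.
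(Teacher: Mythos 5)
Your proposal is correct and follows essentially the same argument as the paper: casework on how the three vertices are distributed among the copies of $H$ (all in one copy, two-plus-one, or all in distinct copies), with the factor of $2$ arising from the ordered choice of which adjacent pair of copies holds the in-block edge. No meaningful difference in approach or substance.
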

\begin{proof}
We do casework on the distribution of the vertices of $K_3$ across the copies of $H$ in $\inflate(G,H)$. 

In the case where the three vertices all belong in one copy of $H$, there are $|G|$ copies of $H$ to choose from, each of which has $t(\FlagGraph{3}{1--2,1--3,2--3}, H)\tbinom{|H|}{3}$ triangles. This corresponds to the first term in the numerator. 

If two vertices are from one copy of $H$ and one is from a different copy, there are $2\binom{|G|}{2}t\left(\FlagGraph{2}{1--2}\right)$ choices of the ordered copies of $H$ that have edges between them. Now the two vertices that are in the same copy of $H$ must have an edge between them, so there are $\binom{|H|}{2}t\left(\FlagGraph{2}{1--2}, H\right)$ choices for these two vertices, and $|H|$ choices for the third vertex. 

Finally, when the three vertices are in different copies of $H$, there are $\binom{|G|}{3}t\left(\FlagGraph{3}{1--2,2--3,3--1},H\right)|H|^3$ sets of vertices that work.

Dividing the total by $\binom{|G||H|}{3}$ gives the desired density.
\end{proof}

We prove an analogous theorem for $P_3$:
\begin{lemma}
 Given graphs $H$ and $G$, the density of $P_3$ in $\inflate(G,H)$ is given by the following formula:
\begin{multline*}
t\left(\FlagGraph{3}{1--2,1--3},\inflate(G,H)\right)= \\
\frac{|G|t(\FlagGraph{3}{1--2,1--3},H)\tbinom{|H|}{3}+2\tbinom{|G|}{2}t(\FlagGraph{2}{1--2},G)\tbinom{|H|}{2}|H|t(\FlagGraph{2}{},H)+\tbinom{|G|}{3}t(\FlagGraph{3}{1--2,1--3},G)|H|^3}{\tbinom{|G||H|}{3}}.
\end{multline*}
\end{lemma}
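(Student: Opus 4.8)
The plan is to prove this formula by casework on how the three vertices of a $P_3$ (a path on three vertices, i.e.\ a graph with exactly two edges) distribute among the copies $H_1,\ldots,H_{|G|}$ of $H$ in $\inflate(G,H)$, exactly mirroring the structure of the preceding $K_3$ lemma. I would enumerate the vertices of $G$ as $v_1,\ldots,v_{|G|}$ with corresponding copies $H_1,\ldots,H_{|G|}$, and recall the governing edge rule of inflation: two vertices in distinct copies $H_i,H_j$ are adjacent precisely when $v_i\sim v_j$ in $G$, while two vertices within the same copy are adjacent according to $H$.

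The three cases are: all three vertices in one copy, two in one copy and one in another, and all three in distinct copies. First I would handle the all-in-one-copy case: each of the $|G|$ copies contributes $t(\FlagGraph{3}{1--2,1--3},H)\tbinom{|H|}{3}$ induced copies of $P_3$, giving the first numerator term. Next, the distinct-copies case: here the induced subgraph on the three chosen vertices is determined entirely by the adjacencies among $v_i,v_j,v_k$ in $G$ (since any cross-copy pair is adjacent iff the corresponding $G$-vertices are), so we get exactly a $P_3$ when the three $G$-vertices induce a $P_3$; this yields $\tbinom{|G|}{3}t(\FlagGraph{3}{1--2,1--3},G)|H|^3$, the third term.

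The interesting and slightly subtle case is the middle one, with two vertices in a copy $H_i$ and the third in a distinct copy $H_j$. For the induced subgraph to be a $P_3$ we need exactly two of the three pairs to be edges. The two same-copy vertices contribute an edge iff they are adjacent in $H$; the two cross pairs are each edges iff $v_i\sim v_j$ in $G$. The key observation is that if $v_i\sim v_j$, then \emph{both} cross pairs are edges, so to get exactly two edges total we need the same-copy pair to be a \emph{non}-edge in $H$ --- this is why the $t(\FlagGraph{2}{},H)$ (non-edge density of $H$) factor appears here, in contrast to the $K_3$ formula which used the edge density $t(\FlagGraph{2}{1--2},H)$. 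I would count: $2\tbinom{|G|}{2}t(\FlagGraph{2}{1--2},G)$ ordered choices of adjacent copies $(H_i,H_j)$, then $\tbinom{|H|}{2}t(\FlagGraph{2}{},H)$ choices of a non-adjacent pair inside $H_i$, and $|H|$ choices for the third vertex in $H_j$, giving the second term. (One should also check that when $v_i\not\sim v_j$ the cross pairs are non-edges, so at most the single same-copy edge survives and no $P_3$ arises, confirming those configurations contribute nothing.)

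The main obstacle is purely bookkeeping: ensuring the middle case is counted correctly, namely recognizing that a valid $P_3$ forces the in-copy pair to be a non-edge and that the center vertex of the path is necessarily the lone vertex in $H_j$. Once each case is pinned down, summing the three contributions and dividing by $\tbinom{|G||H|}{3}$ yields the stated density, so I would close by collating the terms.
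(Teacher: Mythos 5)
Your proposal is correct and follows essentially the same argument as the paper: casework on how the three vertices split among the copies of $H$, with the key point in the mixed case being that adjacency of $v_i$ and $v_j$ forces both cross pairs to be edges, so the in-copy pair must be a non-edge of $H$ (hence the factor $t(\FlagGraph{2}{},H)$). In fact your write-up of the middle case is cleaner and more explicit than the paper's own, which states this step rather tersely.
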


\begin{proof}
We do casework on the distribution of the vertices of $P_3$ across the copies of $H$ in $\inflate(G,H)$. 

In the case where the three vertices all belong in one copy of $H$, there are $|G|$ copies of $H$ to choose from, each of which has $t(\FlagGraph{3}{1--2,1--3}, H)\tbinom{|H|}{3}$ copies of $P_3$. This corresponds to the first term in the numerator. 

If two vertices are from an $i$-th copy of $H$ and one is from a $j$-th copy of $H$, the two vertices from the copy of $H$ must be non-adjacent, but the vertices $i$ and $j$ in $G$ must be adjacent. Thus the number of $P_3$ subgraphs is the same as the number of ordered pairs of edges, one from $h$ and the other one from $G$. We get the total of  $2\binom{|G|}{2}t\left(\FlagGraph{2}{1--2}\right)$.

Finally, when the three vertices are in different copies of $H$, there are $\binom{|G|}{3}t\left(\FlagGraph{3}{1--2,1--3},H\right)|H|^3$ sets of vertices that work.

Dividing the total by $\binom{|G||H|}{3}$ gives the desired density.
\end{proof}

By considering swapping edges with non-edges, we can get formulae for densities of the other two subgraphs on three vertices. They are stated in the following two lemmas.

\begin{lemma} Given graphs $H$ and $G$, the density of the three isolated vertices in $\inflate(G,H)$ is given by the following formula:
\begin{multline*}
t\left(\FlagGraph{3}{}, \inflate(G,H)\right)= \\
\frac{|G|t\left(\FlagGraph{3}{}, H\right)\tbinom{|H|}{3}+2\tbinom{|G|}{2}t\left(\FlagGraph{2}{},G\right)\tbinom{|H|}{2}|H|t\left(\FlagGraph{2}{},H\right)+\tbinom{|G|}{3}t\left(\FlagGraph{3}{},G\right)|H|^3}{\tbinom{|G||H|}{3}}.
\end{multline*}
\end{lemma}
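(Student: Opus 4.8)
The plan is to obtain this formula for free from the already-proved $K_3$ lemma by passing to complements, rather than repeating the casework. The key structural fact I would establish first is that inflation commutes with complementation: $\overline{\inflate(G,H)} = \inflate(\overline{G},\overline{H})$. This follows directly from the definition of inflation by checking the adjacency of two vertices $u$ in the copy $H_i$ and $v$ in the copy $H_j$ in the three possible configurations: when $i=j$, adjacency in $\inflate(G,H)$ is governed by $H$, and flipping it gives $\overline{H}$; when $i\neq j$ with $i\sim j$ in $G$, all such pairs are edges, and flipping gives non-edges, matching $i\not\sim j$ in $\overline{G}$; and when $i\neq j$ with $i\not\sim j$ in $G$, no such pairs are edges, and flipping gives all edges, matching $i\sim j$ in $\overline{G}$. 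In every case the complemented adjacency is exactly the one prescribed by inflating $\overline{G}$ with respect to $\overline{H}$.

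With that identity in hand, I would use the elementary observation that a $3$-subset induces an independent set in a graph $X$ precisely when it induces a triangle in $\overline{X}$, so $t\!\left(\FlagGraph{3}{},X\right)=t\!\left(\FlagGraph{3}{1--2,1--3,2--3},\overline{X}\right)$ for every $X$. Applying this with $X=\inflate(G,H)$ and then the commutation fact yields
\[
t\!\left(\FlagGraph{3}{},\inflate(G,H)\right)=t\!\left(\FlagGraph{3}{1--2,1--3,2--3},\inflate(\overline{G},\overline{H})\right).
\]
I would then invoke the $K_3$ lemma on the right-hand side and translate each term back to $G$ and $H$ using $|\overline{G}|=|G|$, $|\overline{H}|=|H|$, together with the complement-density relations $t\!\left(\FlagGraph{3}{1--2,1--3,2--3},\overline{G}\right)=t\!\left(\FlagGraph{3}{},G\right)$, $t\!\left(\FlagGraph{2}{1--2},\overline{G}\right)=t\!\left(\FlagGraph{2}{},G\right)$, and likewise for $H$. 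Each of the three numerator terms maps exactly onto the corresponding claimed term, and the denominator $\binom{|G||H|}{3}$ is unchanged.

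The main obstacle, though a minor one, is the first step: making the informal ``swapping edges with non-edges'' intuition from the surrounding text into the precise claim $\overline{\inflate(G,H)}=\inflate(\overline{G},\overline{H})$ and checking it in the three adjacency cases above. Once that is settled, the remainder is mechanical substitution. A direct alternative would be to mirror the $K_3$ proof by splitting into cases according to how the three chosen vertices distribute among the copies of $H$, replacing each ``edge'' requirement by its ``non-edge'' counterpart; but the complement argument is shorter and makes the appearance of the non-edge densities $t\!\left(\FlagGraph{2}{},G\right)$ and $t\!\left(\FlagGraph{2}{},H\right)$ transparent.
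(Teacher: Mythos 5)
Your proposal is correct and is exactly the argument the paper intends: the paper gives no separate proof for this lemma, justifying it only by the one-line remark about ``swapping edges with non-edges,'' which is precisely the complementation argument you formalize via $\overline{\inflate(G,H)}=\inflate(\overline{G},\overline{H})$ and the relation $t\left(\FlagGraph{3}{},X\right)=t\left(\FlagGraph{3}{1--2,1--3,2--3},\overline{X}\right)$. Your write-up simply supplies the details the paper leaves implicit, and they check out.
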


\begin{lemma}
 Given graphs $H$ and $G$, the density of the complement of $P_3$ in $\inflate(G,H)$ is given by the following formula:
\begin{multline*}
t\left(\FlagGraph{3}{2--3},\inflate(G,H)\right)= \\
\frac{|G|t(\FlagGraph{3}{2--3},H)\tbinom{|H|}{3}+2\tbinom{|G|}{2}t(\FlagGraph{2}{},G)\tbinom{|H|}{2}|H|t(\FlagGraph{2}{1--2},H)+\tbinom{|G|}{3}t(\FlagGraph{3}{2--3},G)|H|^3}{\tbinom{|G||H|}{3}}.
\end{multline*}
\end{lemma}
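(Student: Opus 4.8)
The plan is to prove this exactly as the preceding two lemmas were proved: by casework on how the three vertices of the target graph $\FlagGraph{3}{2--3}$ (a single edge together with an isolated vertex) are distributed among the $|G|$ copies of $H$ inside $\inflate(G,H)$. The denominator is always $\tbinom{|G||H|}{3}$, so it suffices to count the unordered triples of vertices in $\inflate(G,H)$ that induce this graph, and the three numerator terms will correspond to the three distribution cases.

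First I would handle the case where all three vertices lie in a single copy $H_i$. There are $|G|$ choices of copy, and within each the number of induced copies of $\FlagGraph{3}{2--3}$ is $t(\FlagGraph{3}{2--3},H)\tbinom{|H|}{3}$; this gives the first numerator term. Next, the case of two vertices in one copy $H_i$ and the third in a different copy $H_j$. Here the subtlety — and where one must be careful — is determining which internal/external adjacencies produce a single-edge graph. The induced subgraph $\FlagGraph{3}{2--3}$ has exactly one edge, so I would argue that the single edge must be the one \emph{inside} $H_i$, while the lone vertex in $H_j$ must be non-adjacent to both of them; since all vertices of $H_i$ are joined to all of $H_j$ precisely when $i,j$ are adjacent in $G$, the lone vertex is non-adjacent exactly when $i$ and $j$ are \emph{non}-adjacent in $G$. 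Thus I would select an ordered pair of copies non-adjacent in $G$ — giving $2\tbinom{|G|}{2}t(\FlagGraph{2}{},G)$ ordered pairs — then choose the in-copy edge in $\tbinom{|H|}{2}t(\FlagGraph{2}{1--2},H)$ ways and the lone vertex in $|H|$ ways. This reproduces the middle numerator term, and it is worth noting that the edge-densities appearing here ($t(\FlagGraph{2}{},G)$ for $G$ and $t(\FlagGraph{2}{1--2},H)$ for $H$) are precisely the ones that make the formula the "edge/non-edge swap" of the $P_3$ lemma, consistent with the remark preceding the statement.

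Finally, the case of three vertices in three distinct copies $H_i, H_j, H_k$. A single edge among them forces exactly one of the three pairs of copies to be adjacent in $G$ and the other two pairs non-adjacent, which is exactly the condition that $i,j,k$ induce a copy of $\FlagGraph{3}{2--3}$ in $G$; there are $\tbinom{|G|}{3}t(\FlagGraph{3}{2--3},G)$ such triples of copies and $|H|^3$ choices of one vertex from each, giving the last numerator term. I expect the main obstacle to be nothing deeper than bookkeeping in the mixed case: namely justifying cleanly that "exactly one induced edge among three vertices spread across copies" translates, via the all-or-nothing adjacency rule of inflation, into "exactly one adjacent pair of copies," and making sure the edge-versus-non-edge densities of $G$ and of $H$ are assigned to the correct factors. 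Alternatively, rather than redoing the casework, I could simply invoke the complementation symmetry: $\FlagGraph{3}{2--3}$ is the complement of $P_3$, the complement of $\inflate(G,H)$ is $\inflate(\overline{G},\overline{H})$, and densities transform by swapping each $t(\FlagGraph{2}{1--2},\cdot)$ with $t(\FlagGraph{2}{},\cdot)$; substituting into the $P_3$ lemma then yields the claimed formula directly.
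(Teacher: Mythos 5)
Your proposal is correct, and it is actually more detailed than what the paper itself provides for this lemma. The paper carries out the vertex-distribution casework only for the $K_3$ and $P_3$ lemmas, and then obtains this lemma (and the one for three isolated vertices) with no written proof at all, simply remarking that they follow ``by considering swapping edges with non-edges'' --- i.e., precisely the complementation argument you sketch in your final sentence: $\overline{\inflate(G,H)}=\inflate(\overline{G},\overline{H})$, and applying the $P_3$ lemma to the complements swaps each $t(\FlagGraph{2}{1--2},\cdot)$ with $t(\FlagGraph{2}{},\cdot)$ and each three-vertex graph with its complement, yielding the stated formula. Your primary route, redoing the casework directly, is a genuinely self-contained alternative, and its main payoff is that it verifies the one non-obvious feature of the formula: in the mixed case the two copies of $H$ must be \emph{non}-adjacent in $G$ (hence the factor $t(\FlagGraph{2}{},G)$) while the in-copy pair must be an edge of $H$ (hence the factor $t(\FlagGraph{2}{1--2},H)$), which is the opposite pairing from the $K_3$ lemma; your treatment of the three-copies case (exactly one adjacent pair of copies in $G$) is likewise correct. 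The complementation route is shorter but rests on the identity $\overline{\inflate(G,H)}=\inflate(\overline{G},\overline{H})$, which the paper never states explicitly; either route establishes the lemma, and you have both.
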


\subsection{2-symmetric graphs}

We are interested in 2-symmetric graphs and can deduce the following corollary from Lemma~\ref{edgeDensity}.

\begin{corollary}
If $H$ and $G$ are 2-symmetric graphs, then $\inflate(G,H)$ is also 2-symmetric.
\end{corollary}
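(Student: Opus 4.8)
The plan is to invoke Lemma~\ref{edgeDensity} directly and simplify. Since $G$ and $H$ are both $2$-symmetric, by definition their edge densities are each $\frac12$, that is $t\left(\FlagGraph{2}{1--2},H\right)=t\left(\FlagGraph{2}{1--2},G\right)=\frac12$. Substituting these two values into the formula of Lemma~\ref{edgeDensity}, the factor $\frac12$ can be pulled out of the numerator, leaving the claim equivalent to showing that the bracketed quantity equals $\frac12\tbinom{|G||H|}{2}$.

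Concretely, writing $g=|G|$ and $h=|H|$, after substitution it remains only to verify the purely arithmetic identity
\[g\tbinom{h}{2}+\tbinom{g}{2}h^2=\tbinom{gh}{2}.\]
This is where the real content sits, and it has a transparent combinatorial reading that I would use as a sanity check: the right-hand side counts all unordered pairs of the $gh$ vertices of $\inflate(G,H)$, while the left-hand side splits that count according to whether the two vertices lie in a common copy of $H$ (there are $g\tbinom{h}{2}$ such pairs) or in two distinct copies (there are $\tbinom{g}{2}h^2$ such pairs, since the two copies can be chosen in $\tbinom{g}{2}$ ways and then one vertex from each of the $h$-vertex copies). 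So the identity is just the statement that these two cases partition all pairs.

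I would then confirm the identity by a one-line expansion: both sides equal $\frac12\left(g^2h^2-gh\right)$, as $g\cdot\frac{h(h-1)}2+\frac{g(g-1)}2\cdot h^2=\frac12(gh^2-gh+g^2h^2-gh^2)$. Plugging this back shows the edge density of $\inflate(G,H)$ is exactly $\frac12$, so the inflation is $2$-symmetric.

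There is no genuine obstacle here; the proof is a direct substitution plus a routine identity. The only conceptual point worth flagging is \emph{why} it works: the value $\frac12$ factors out of both the intra-copy and inter-copy contributions simultaneously, so the weighting by $\tbinom{h}{2}$, $\tbinom{g}{2}h^2$ reassembles the total pair count regardless of the sizes. This is precisely the phenomenon that will \emph{fail} for the $3$-vertex densities, where the analogous mixed terms carry different coefficients than a single clean multiplicative constant, which is why (as the paper later notes) the inflation of two $3$-symmetric graphs need not be $3$-symmetric.
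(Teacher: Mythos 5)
Your proof is correct and follows essentially the same route as the paper: substitute the edge densities $\tfrac12$ into Lemma~\ref{edgeDensity} and simplify to get $\tfrac12$. The only difference is cosmetic --- the paper grinds through the algebra directly with $x=|G|$, $y=|H|$, while you factor out the $\tfrac12$ first and observe that what remains is the pair-counting identity $g\tbinom{h}{2}+\tbinom{g}{2}h^2=\tbinom{gh}{2}$, a pleasant reformulation of the same computation.
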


\begin{proof}
Assume $t\left(\FlagGraph{2}{1--2},G\right)=t\left(\FlagGraph{2}{1--2},H\right)=\frac{1}{2}.$ Also let $x=|G|,y=|H|.$ Then 
\[t\left(\FlagGraph{2}{1--2},\inflate(G,H)\right)=\frac{\frac{xy(y-1)}{4}+\frac{x(x-1)y^2}{4}}{\frac{xy(xy-1)}{2}}=\frac{y-1+y(x-1)}{2(xy-1)}=\frac{1}{2},\]
as desired.
\end{proof}

For example, in Figure~\ref{fig:inf}, both graphs in the inflation are 2-symmetric, so the graph shown in the figure will also be 2-symmetric.

If graphs $G$ and $H$ are 2-symmetric the formulae for densities in their inflation simplifies. Moreover the resulting formula is the same of all the graphs with 3 vertices.

\begin{lemma}
If If graphs $G$ and $H$ are 2-symmetric, and $S$ is a graph with 3 vertices then the density of $S$ in $\inflate(G,H)$ is provided by the following formula:
\[t\left(S,\inflate(G,H)\right)= 
\frac{|G|t(S,H)\tbinom{|H|}{3}+\frac{1}{2}\tbinom{|G|}{2}\tbinom{|H|}{2}|H|+\tbinom{|G|}{3}t(S,G)|H|^3}{\tbinom{|G||H|}{3}}.\]
\label{threeinf}
\end{lemma}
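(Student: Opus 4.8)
The plan is to derive the unified formula directly from the four preceding lemmas, which compute $t(S,\inflate(G,H))$ separately for each of the four isomorphism types of $3$-vertex graph $S$: the triangle $K_3$, the path $P_3$, the single edge with an isolated vertex, and the empty graph on three vertices. The key observation is that all four of those formulas share the same skeleton. Each has numerator consisting of three terms over the common denominator $\binom{|G||H|}{3}$, where the first term is $|G|\,t(S,H)\binom{|H|}{3}$, the third term is $\binom{|G|}{3}\,t(S,G)\,|H|^3$, and the only place in which the four formulas differ is the middle term, which always takes the form $2\binom{|G|}{2}\,t(a,G)\,\binom{|H|}{2}\,|H|\,t(b,H)$ for a choice of $a,b\in\{\text{edge},\text{non-edge}\}$ dictated by $S$ (for instance, edge/edge for $K_3$ and non-edge/non-edge for the empty graph).

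First I would invoke the hypothesis that $G$ and $H$ are $2$-symmetric. By definition this forces every edge density and every non-edge density in each of $G$ and $H$ to equal $\tfrac12$; that is, $t(a,G)=t(b,H)=\tfrac12$ irrespective of whether $a$ or $b$ denotes an edge or a non-edge. This is exactly the fact that dissolves the distinction among the four middle terms.

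Second, I would substitute these values into the common middle-term expression. In every one of the four cases the middle term collapses to $2\cdot\tfrac12\cdot\tfrac12\cdot\binom{|G|}{2}\binom{|H|}{2}|H|=\tfrac12\binom{|G|}{2}\binom{|H|}{2}|H|$, which no longer depends on $S$. The first and third terms are left untouched and still carry $t(S,H)$ and $t(S,G)$ respectively. Reassembling the three terms over $\binom{|G||H|}{3}$ yields precisely the claimed formula, and since the argument applies verbatim to all four types of $S$, it holds for every graph $S$ on three vertices.

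I do not expect a genuine obstacle here, since the result is essentially a specialization of already-established formulas rather than a new computation. The only point requiring care is bookkeeping: confirming that the four preceding lemmas really do share the stated skeleton, and that in each case the edge/non-edge pairing appearing in the middle term is exactly a pairing whose densities the $2$-symmetry hypothesis sets to $\tfrac12$. Once that uniformity is verified, the collapse of the middle term to a value independent of $S$ is immediate, and this very independence is what permits a single combined formula in place of four.
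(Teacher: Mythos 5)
Your proposal is correct and is exactly the argument the paper intends: the lemma is stated as an immediate specialization of the four preceding density formulas, whose only $S$-dependent middle term $2\binom{|G|}{2}t(a,G)\binom{|H|}{2}|H|\,t(b,H)$ collapses to $\tfrac{1}{2}\binom{|G|}{2}\binom{|H|}{2}|H|$ once $2$-symmetry forces all edge and non-edge densities to $\tfrac{1}{2}$. The paper omits the proof entirely, so your bookkeeping verification of the shared skeleton is, if anything, slightly more explicit than the source.
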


\subsection{3-symmetric graphs}

If $G$ and $H$ are 3-symmetric, one might expect $\inflate(G,H)$ can be as well. The reason for this expectation is that the inflation of two 3-symmetric permutations can be a 3-symmetric permutation under certain divisibility conditions \cite{KZ}. However, this is not the case.

\begin{corollary}
If $G$ and $H$ are 3-symmetric graphs with more than one vertex, then  $\inflate(G,H)$ is not 3 symmetric.
\end{corollary}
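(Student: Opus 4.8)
The plan is to use the simplified density formula from Lemma~\ref{threeinf}, which applies because any 3-symmetric graph is 2-symmetric (by the Corollary to the $j$-symmetricity theorem). So both $G$ and $H$ are 2-symmetric, and for any 3-vertex graph $S$ we have
\[
t\bigl(S,\inflate(G,H)\bigr)=\frac{|G|\,t(S,H)\tbinom{|H|}{3}+\tfrac12\tbinom{|G|}{2}\tbinom{|H|}{2}|H|+\tbinom{|G|}{3}t(S,G)|H|^3}{\tbinom{|G||H|}{3}}.
\]
Since $G$ and $H$ are in fact 3-symmetric, I can substitute the known exact values $t(S,G)=t(S,H)=\tfrac18$ when $S=K_3$ (and likewise $\tfrac38,\tfrac38,\tfrac18$ for the other three subgraphs). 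The strategy is then to show that at least one of these four densities differs from its target value ($\tfrac18$ or $\tfrac38$), which suffices to conclude $\inflate(G,H)$ is not 3-symmetric.

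First I would specialize to $S=K_3$ and plug in $t(K_3,G)=t(K_3,H)=\tfrac18$. Writing $x=|G|$ and $y=|H|$, the numerator becomes
\[
\tfrac18 x\tbinom{y}{3}+\tfrac12\tbinom{x}{2}\tbinom{y}{2}y+\tfrac18\tbinom{x}{3}y^3,
\]
and I would compare the resulting density against the target $\tfrac18$. Equivalently, I would check whether $8\cdot(\text{numerator})=\tbinom{xy}{3}$. Expanding everything as polynomials in $x$ and $y$ and subtracting, the $\tfrac18 x\tbinom{y}{3}$ and $\tfrac18\tbinom{x}{3}y^3$ terms pair against the ``all three vertices in distinct or identical copies'' parts of $\tbinom{xy}{3}$, and the discrepancy is driven by the middle term $\tfrac12\tbinom{x}{2}\tbinom{y}{2}y$. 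Because $G$ and $H$ are 3-symmetric their triangle densities are $\tfrac18$ rather than the ``random-pair'' value one would need for the inflation to self-consistently reproduce $\tfrac18$, so I expect a nonzero residual. The cleanest route is to fix a convenient smallest case, say $x=y=8$ (the smallest nontrivial 3-symmetric order), and simply evaluate: the triangle density of $\inflate(G,H)$ on $64$ vertices will come out to a rational number that I expect to be strictly above $\tfrac18$, because the cross term contributes ``mixed'' triangles at a higher rate than pure random mixing predicts.

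Alternatively, and more robustly, I would argue directly from the formula for general $x,y>1$: set $f(S)=8\cdot\text{numerator}-\tbinom{xy}{3}$ and show $f(K_3)\neq 0$. Since the target densities for $K_3$ and the empty graph are equal (both $\tfrac18$) and the two graphs with one/two edges are equal (both $\tfrac38$), and $G,H$ are self-consistent under 2-symmetry, the only freedom left is in the middle cross-copy term, where an ordered pair of vertices in one copy joined to a single vertex in an adjacent copy always forms the same configuration regardless of whether we want $K_3$ or an empty triple. This forces the middle terms for $K_3$ and for the empty graph to coincide, whereas their targets via $\tbinom{xy}{3}$ differ in the way the pure terms distribute — producing an inconsistency. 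I would make this precise by subtracting the $K_3$ density equation from the empty-triple density equation: the first and third numerator terms cancel against each other in the difference (both equal $\tfrac18$ for $G$ and $H$), the middle terms are identical, yet the two target densities are both $\tfrac18$, so this particular comparison is consistent and I must instead compare $K_3$ against $P_3$.

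The main obstacle is precisely this bookkeeping: I must choose the right pair of subgraphs to compare so that the difference isolates a genuinely nonzero quantity rather than canceling. The productive comparison is $K_3$ versus $P_3$ (target $\tfrac18$ versus $\tfrac38$). In that difference the middle cross-copy terms differ — for $K_3$ the two same-copy vertices must be adjacent while for $P_3$ they must be non-adjacent — so the middle contributions are $\tfrac12\tbinom{x}{2}\tbinom{y}{2}y\cdot t(\text{edge},H)$ versus $\tfrac12\tbinom{x}{2}\tbinom{y}{2}y\cdot t(\text{non-edge},H)$, which are equal by 2-symmetry, while the pure terms carry the $\tfrac18$ versus $\tfrac38$ gap. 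Hence the difference of the two inflation densities equals $\bigl(\tfrac18-\tfrac38\bigr)\cdot\frac{x\tbinom{y}{3}+\tbinom{x}{3}y^3}{\tbinom{xy}{3}}$, which is strictly negative and of magnitude strictly less than $\tfrac14$ (since the coefficient $\frac{x\binom{y}{3}+\binom{x}{3}y^3}{\binom{xy}{3}}$ is a proper fraction once the cross term is nonzero, i.e. once $x,y>1$). Therefore the two densities cannot simultaneously equal their required values $\tfrac18$ and $\tfrac38$, since those differ by exactly $\tfrac14$. This gives the contradiction and completes the proof; the hard part is verifying that the coefficient is strictly between $0$ and $1$, which follows from $\tbinom{xy}{3}>x\tbinom{y}{3}+\tbinom{x}{3}y^3$ whenever there exist mixed triples, i.e. whenever $x>1$ and $y>1$.
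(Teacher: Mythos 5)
Your final argument is correct, but it takes a genuinely different route from the paper's. The paper works with a single density: it plugs $t(K_3,G)=t(K_3,H)=\tfrac{1}{8}$ and the edge densities $\tfrac{1}{2}$ into the inflation formula for the triangle, expands, and finds
\[
t\left(K_3,\inflate(G,H)\right)-\tfrac{1}{8}=\frac{3|H|(|H|-1)(|G|-1)}{8(|G||H|-1)(|G||H|-2)},
\]
which is strictly positive once $|G|,|H|>1$, so the triangle density alone already fails. You instead compare two densities: invoking Lemma~\ref{threeinf} (legitimately, since 3-symmetric graphs are 2-symmetric), the cross-copy middle term is the same for all four 3-vertex graphs and cancels, giving
\[
t\left(K_3,\inflate(G,H)\right)-t\left(P_3,\inflate(G,H)\right)=-\tfrac{1}{4}\cdot\frac{|G|\tbinom{|H|}{3}+\tbinom{|G|}{3}|H|^3}{\tbinom{|G||H|}{3}},
\]
and since $\tbinom{|G||H|}{3}=|G|\tbinom{|H|}{3}+|G|(|G|-1)\tbinom{|H|}{2}|H|+\tbinom{|G|}{3}|H|^3$ with the mixed middle summand positive exactly when $|G|,|H|>1$, the ratio is strictly less than $1$, so the difference cannot equal the required $-\tfrac{1}{4}$. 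Your route buys cleanliness: no polynomial expansion, and the contradiction falls out of a counting identity plus the cancellation of the middle terms. What it gives up is information — it shows the four densities cannot all be right without locating which one is off, whereas the paper's heavier computation exhibits the exact positive excess of the triangle density. One presentational caveat: the first two-thirds of your writeup (the proposed numerical check at $|G|=|H|=8$, which would not prove the general statement, and the aborted comparison of $K_3$ against the empty triple, which you correctly note yields no contradiction) is exploration rather than proof; only your last paragraph is the argument, and it should be written as such, with the identity $\tbinom{|G||H|}{3}>|G|\tbinom{|H|}{3}+\tbinom{|G|}{3}|H|^3$ stated and justified explicitly rather than parenthetically.
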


\begin{proof}
By plugging in the density of $K_3$ as $\frac{1}{8}$ in both $G$ and $H$, and dividing the numerator and denominator by $\frac{|G||H|}{6}$ we get
$$\frac{\tfrac{1}{8}(|H|-1)(|H|-2)+\tfrac{3}{4}(|G|-1)|H|(|H|-1)+\tfrac{1}{8}(|G|-1)(|G|-2)|H|^2}{(|G||H|-1)(|G||H|-2)}.$$

After simplifying we get
$$\frac{-3|H|^2+3|H|+2+3|G||H|^2-6|G||H|+|G|^2|H|^2}{8(|G||H|-1)(|G||H|-2)}.$$
Subtracting $\frac{1}{8}$, we get
$$\frac{\tfrac{1}{8}(-3|H|^2+3|H|+3|G||H|^2-3|G||H|)}{(|G||H|-1)(|G||H|-2)}=\frac{\tfrac{1}{8}(3|H|(|H|-1)(|G|-1))}{(|G||H|-1)(|G||H|-2)}\geq 0.$$
Thus the density of $K_3$ in $\inflate(G,H)$ is not $\frac{1}{8}$.
\end{proof}

We call a graph $G$ \textit{almost-3-symmetric} if the following three conditions hold:
\begin{itemize}
    \item $G$ is 2-symmetric,
    \item $t(\FlagGraph{3}{1--2,2--3,3--1},G) = t(\FlagGraph{3}{},G)$,
    \item $t(\FlagGraph{3}{1--2,1--3},G)=t(\FlagGraph{3}{2--3},G)$.
\end{itemize}

\begin{lemma}
Any two of the conditions for almost-3-symmetric graph imply the third.
\end{lemma}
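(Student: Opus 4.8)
The plan is to reduce all three defining conditions to a single linear identity among the four three-vertex subgraph densities, after which each of the three implications becomes immediate. Write $a = t(\FlagGraph{3}{1--2,2--3,3--1},G)$, $b = t(\FlagGraph{3}{1--2,1--3},G)$, $c = t(\FlagGraph{3}{2--3},G)$, and $d = t(\FlagGraph{3}{},G)$ for the densities of the triangle, path, single edge, and empty triple respectively, and let $e = t(\FlagGraph{2}{1--2},G)$ be the edge density. In this notation the three conditions read (C1) $e = \tfrac12$, (C2) $a = d$, and (C3) $b = c$, and the goal is to show that any two of these force the third.

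First I would record the two structural identities that hold for \emph{every} graph $G$ on $n$ vertices. Since each $3$-subset of vertices induces exactly one of the four graphs, we have $a + b + c + d = 1$. Next I would double-count incidences between edges and $3$-subsets: each edge of $G$ lies in exactly $n-2$ of the $\binom{n}{3}$ triples, and $G$ has $\binom{n}{2}e$ edges, so the average number of edges in a uniformly random triple equals $(n-2)\binom{n}{2}e / \binom{n}{3} = 3e$. Evaluating this same average by weighting the edge counts $3,2,1,0$ of the four types by their densities gives $3a + 2b + c$. Hence $3a + 2b + c = 3e$.

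The key step is to combine these two identities. Eliminating via $a + b + c + d = 1$ turns $3a + 2b + c = 3e$ into
\[ 6\left(e - \tfrac12\right) = 3(a - d) + (b - c). \]
This single relation ties together precisely the three quantities $e - \tfrac12$, $a - d$, and $b - c$ whose vanishing constitutes (C1), (C2), and (C3). If two of the three terms vanish, the relation forces the third to vanish as well: assuming (C1) and (C2) gives $b = c$; assuming (C1) and (C3) gives $a = d$; and assuming (C2) and (C3) gives $e = \tfrac12$. This establishes all three implications simultaneously.

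There is no serious obstacle here; the only thing to get right is the double-counting constant in the edge-versus-$3$-subset incidence count, which must collapse to the clean factor $3e$ rather than an $n$-dependent expression. Once that identity is verified, the remainder is the one-line linear observation above.
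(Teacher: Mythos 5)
Your proof is correct and takes essentially the same route as the paper's: both rest on the two identities $a+b+c+d=1$ and $3a+2b+c=3e$ (the paper states the latter as counting edges contributed by each $3$-vertex subgraph) and conclude from the resulting linear dependence among the three conditions. Your write-up is in fact slightly more complete, since you exhibit the dependence explicitly as $6\left(e-\tfrac{1}{2}\right)=3(a-d)+(b-c)$ and verify the counting identity by double counting, where the paper only asserts that the conditions are linearly dependent.
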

\begin{proof}
The first condition is equivalent to
\[3t\left(\FlagGraph{3}{1--2,2--3,3--1},G\right)+2t\left(\FlagGraph{3}{1--2,1--3},G\right)+t\left(\FlagGraph{3}{2--3},G\right)=3t\left(\FlagGraph{2}{1--2},G\right)=\frac{3}{2}\] upon counting the number of edges contributed by each subgraph of $G$ on three vertices.
The second condition is $$t\left(\FlagGraph{3}{1--2,2--3,3--1},G\right)=t\left(\FlagGraph{3}{},G\right),$$ and the third is $$t\left(\FlagGraph{3}{1--2,3--1},G\right)=t\left(\FlagGraph{3}{2--3},G\right).$$ Furthermore, there is the general condition $$t\left(\FlagGraph{3}{1--2,2--3,3--1},G\right)+t\left(\FlagGraph{3}{1--2,1--3},G\right)+t\left(\FlagGraph{3}{2--3},G\right)+t\left(\FlagGraph{3}{},G\right)=1.$$ Since these conditions are linearly dependent, it follows that any two conditions imply the third.
\end{proof}

By definition a 3-symmetric graph is almost-3-symmetric. Also, a self-complementary graph is almost-3-symmetric.

\begin{theorem}
If $G$ and $H$ are almost-3-symmetric, then $\inflate(G,H)$ is also almost-3-symmetric.
\end{theorem}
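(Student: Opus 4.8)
The plan is to verify the three defining conditions of almost-3-symmetry for $\inflate(G,H)$ in turn, using the density formulae already established for the four three-vertex subgraphs. The first condition, that $\inflate(G,H)$ is 2-symmetric, comes essentially for free: since $G$ and $H$ are almost-3-symmetric they are in particular 2-symmetric, and the earlier corollary shows that the inflation of two 2-symmetric graphs is again 2-symmetric. By the preceding lemma that any two of the three conditions imply the third, it then suffices to establish just one of the two remaining conditions, say $t(K_3,\inflate(G,H))=t(\text{empty},\inflate(G,H))$, where $\text{empty}$ denotes three isolated vertices.

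First I would write out the numerators of the density formulae for $K_3$ and for the empty graph (their denominators $\binom{|G||H|}{3}$ agree) and compare them term by term. Each numerator splits into three contributions according to how the three chosen vertices are distributed among the copies of $H$: all in one copy, two-and-one, or all in distinct copies. The ``all in one copy'' terms are $|G|\binom{|H|}{3}t(K_3,H)$ and $|G|\binom{|H|}{3}t(\text{empty},H)$, which are equal precisely because $H$ satisfies the second almost-3-symmetric condition. Symmetrically, the ``all distinct copies'' terms $\binom{|G|}{3}|H|^3 t(K_3,G)$ and $\binom{|G|}{3}|H|^3 t(\text{empty},G)$ agree because $G$ satisfies that same condition.

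The middle, two-and-one contribution is where edge densities enter: for $K_3$ it is $2\binom{|G|}{2}\binom{|H|}{2}|H|\,t(\text{edge},G)\,t(\text{edge},H)$, while for the empty graph it is $2\binom{|G|}{2}\binom{|H|}{2}|H|\,t(\text{nonedge},G)\,t(\text{nonedge},H)$. Here 2-symmetry of both $G$ and $H$ forces every edge and non-edge density to equal $1/2$, so both products equal $1/4$ and the two middle terms coincide. Hence the numerators are identical, giving the second condition, and the third condition follows from the two-implies-three lemma. Equivalently, one can check the third condition directly: in the $P_3$ versus $\text{(single edge)}$ comparison the cross terms become $t(\text{edge},G)\,t(\text{nonedge},H)$ and $t(\text{nonedge},G)\,t(\text{edge},H)$, again both $1/4$, while the remaining two terms match by the third condition on $H$ and on $G$.

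I do not expect a genuine obstacle; the argument is essentially bookkeeping. The one point requiring care is making sure the correct edge/non-edge densities are paired in each cross term of each formula — this is exactly where the ``swap edges with non-edges'' symmetry used to derive the last two density lemmas must be tracked consistently. Once the cross terms are seen to collapse to $1/4$ under 2-symmetry, every term matches its counterpart and the conclusion is immediate.
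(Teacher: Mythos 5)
Your proof is correct and follows essentially the same route as the paper: both arguments reduce to comparing the numerators of the inflation density formulae, where 2-symmetry of $G$ and $H$ collapses the two-and-one cross terms to the same constant and the complement-density hypotheses match the all-in-one-copy and all-distinct-copies terms. The only difference is organizational---the paper packages the cross-term collapse into its unified formula (Lemma~\ref{threeinf}) and handles both complement pairs $S$, $\overline{S}$ at once, while you verify the $K_3$/empty-graph pair explicitly and obtain the remaining condition from the two-conditions-imply-the-third lemma, which is an equally valid shortcut.
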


\begin{proof}
For a graph $S$ on three vertices, let $\overline{S}$ be its complement. We need to prove $t(S,\inflate(G,H))=t(\overline{S},\inflate(G,H))$. By Lemma \ref{threeinf}, it suffices to show
\[|G|t(S,H)\binom{|H|}{3}+\binom{|G|}{3}t(S,G)|H^3|=|G|t(\overline{S},H)\binom{|H|}{3}+\binom{|G|}{3}t(\overline{S},G)|H^3|\]
which follows from the assumption that $t(S,G)=t(\overline{S},G)$ and $t(S,H)=t(\overline{S},H)$.
\end{proof}

What is a potential number of vertices $n$ for almost-3-symmetric graph? It has to be 2-symmetric, that is $n$ has remainder 0 or 1 when divided by 4, The other condition is that $\binom{n}{3}$ should be divisible by 2. This is true for $n$ that has remainder 0 or 1 when divided by 4. That means, almost-3-symmetric graphs might exists with the same number of vertices that 2-symmetric graphs exist.

For example, out of four 2-symmetric graphs with 4 vertices, only $P_4$ is almost-3-symmetric.

We tried to inflate almost-3-symmetric graphs and check whether the result is 3-symmetric for small almost-3-symmetric graphs, but we could not find any such examples with our calculations.

\subsection{Asymptotics}
For all of the subgraphs, it is clear that the last terms in the formulae are dominating as $|G|\to \infty$. With this, we have found that the densities of all of the subgraphs tend to their expected densities in the limit case. We formalize this statement in the following theorem.
\begin{theorem}
Let $G_1,G_2,\ldots, G_n, \ldots$ be 3-symmetric graphs whose orders go to $\infty$, and $H$ also be 3-symmetric. Then the densities of any 3-subgraph into the inflation of $H$ into $G_i$ will tend to their expected density in a random graph.
\end{theorem}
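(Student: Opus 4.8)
The plan is to establish the statement in the form that makes it true: for $\inflate(G_i,H)$ in which the growing $3$-symmetric graphs $G_i$ are the host (outer) graph and the fixed $3$-symmetric graph $H$ is the block substituted into each vertex. I read ``the inflation of $H$ into $G_i$'' this way because $H$ is placed \emph{into} the vertices of $G_i$, making $G_i$ the first argument of $\inflate$; this matches the asymptotics remark preceding the theorem that the last terms of the density formulae dominate as the order of the first argument tends to infinity. The reading in which the blocks grow is not viable: taking $H$ fixed of order $h>1$ as the host and letting the blocks grow, the $K_3$ lemma gives a clique density tending to $\frac{h^2+3h-3}{8h^2}\neq\frac18$, so that inflation is $2$-symmetric in the limit but never $3$-symmetric. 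Throughout, write $N=|G_i|\to\infty$ while $h=|H|$ stays fixed.

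First I would apply the four density formulae for $K_3$, $P_3$, its complement, and the empty triple. For a fixed graph $S$ on three vertices, each numerator splits into three contributions according to how the three chosen vertices distribute among the $N$ copies of $H$: all three inside one copy (order $N$), two in one copy and one in another (order $N^2$), and all three in distinct copies, which is the term $\binom{N}{3}\,t(S,G_i)\,h^3$ of order $N^3$. The structural point I would highlight is that when the three vertices lie in three distinct copies, their mutual adjacencies are dictated entirely by $G_i$: two vertices in different copies are adjacent exactly when the corresponding host vertices are adjacent in $G_i$. Hence the induced $3$-vertex graph is isomorphic to $S$ precisely when the three host vertices induce $S$ in $G_i$, which is why this term counts $\binom{N}{3}t(S,G_i)$ host patterns times $h^3$ choices of representatives.

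Next I would divide by $\binom{Nh}{3}$. Since $\binom{N}{3}h^3/\binom{Nh}{3}\to1$ as $N\to\infty$ with $h$ fixed, the dominant term contributes exactly $t(S,G_i)$ in the limit; and because each $G_i$ is \emph{exactly} $3$-symmetric, $t(S,G_i)$ already equals its random-graph value ($\tfrac18$ or $\tfrac38$) for every $i$, so no limiting average is needed there. The remaining two contributions are $O(N^2)$ with constants depending only on $h$, whereas the denominator is $\Theta(N^3)$, so together they contribute $O(1/N)$ to the density, with the implied constant a function of $h$ alone. Running this for all four graphs $S$ shows that every $3$-subgraph density of $\inflate(G_i,H)$ converges to its random-graph value, as claimed.

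The argument has no deep obstacle; the one point demanding care is uniformity across the family $\{G_i\}$, i.e.\ that the error tends to $0$ in $i$ rather than merely pointwise. This is clean here precisely because $h$ is fixed, so every implied constant in the $O(N^2)$ bounds depends on $h$ alone, and because exact $3$-symmetry of each $G_i$ pins the leading coefficient to the correct value with no residual $i$-dependence beyond $N=|G_i|\to\infty$. I would finally remark that the limiting densities are driven solely by the $3$-symmetry of the host graphs $G_i$; the assumption that $H$ is $3$-symmetric fits the theme of inflating symmetric objects but is not needed for the convergence.
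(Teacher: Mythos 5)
Your proof is correct and follows essentially the same route as the paper: plug the four $3$-vertex density formulae into $\inflate(G_i,H)$, observe that the term $\binom{|G_i|}{3}t(S,G_i)|H|^3$ dominates the $\binom{|G_i||H|}{3}$ denominator while the other contributions are $O(|G_i|^2)$, and conclude convergence to the random-graph values from the exact $3$-symmetry of the $G_i$. Your additional observations---that the other reading of ``inflation of $H$ into $G_i$'' would give limiting clique density $\frac{h^2+3h-3}{8h^2}\neq\frac18$, and that the $3$-symmetry of $H$ is never actually used---are correct refinements the paper does not make, but they do not change the substance of the argument.
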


\begin{proof}
Since the $G_i$ and $H$ are 3-symmetric, we have $t(\FlagGraph{3}{1--2,2--3,3--1},G_i)=t(\FlagGraph{3}{1--2,2--3,3--1},H)=\frac{1}{8}$, $t(\FlagGraph{3}{1--2,1--3},G_i)=t(\FlagGraph{3}{1--2,1--3},H)=\frac{1}{8}$. Now with the formulas above, we have the asymptotic formulas \[t(\FlagGraph{3}{1--2,2--3,3--1}, \inflate(G_i,H))=\frac{|G_i|^3|H|^3\cdot \frac{1}{6}\cdot\frac{1}{8}+\mathcal{O}(|G_i|^2)}{\frac{1}{6}|G_i|^3|H|^3+\mathcal{O}(|G_i|)^2}\to\frac{1}{8}\]
as $|G_i|$ gets large. A similar asymptotic formula holds for the path:
\[t(\FlagGraph{3}{1--2,1--3},\inflate(G_i,H))=\frac{|G_i|^3|H|^3\cdot\frac{1}{6}\cdot\frac{3}{8}}{\frac{1}{6}|G_i|^3|H|^3+\mathcal{O}(|G_i|^2)}\to\frac{3}{8}.\]
By symmetry, the analogous statements for the other two subgraphs hold. This proves the theorem.
\end{proof}

\section{Constructing 3-symmetric graphs}\label{sec:cr}

With the aid of a computer, we found examples of 3-symmetric graphs of all feasible orders up to and including 40. In the following section, we explicitly describe graphs of orders 16 and 17.
\subsection{Orders 16 and 17}


As we showed before the next orders of a graph that could be 3-symmetric is 16 and 17. 

We found such graphs by randomly sampling 2-symmetric graphs. Figure~\ref{size16} shows an example of a 3-symmetric graph of order 16. Its adjacency matrix is as follows:
\[\left[ \begin{array}{@{}*{16}{c}@{}}
     0 & 0 & 1 & 0 & 0 & 0 & 0 & 0 & 0 & 0 & 0 & 0 & 1 & 1 & 0 & 0 \\
0 & 0 & 1 & 1 & 0 & 1 & 0 & 0 & 1 & 0 & 1 & 0 & 1 & 0 & 0 & 1 \\
1 & 1 & 0 & 1 & 1 & 0 & 0 & 0 & 1 & 0 & 0 & 1 & 0 & 1 & 1 & 1 \\
0 & 1 & 1 & 0 & 1 & 1 & 0 & 0 & 0 & 0 & 1 & 0 & 1 & 1 & 1 & 0 \\
0 & 0 & 1 & 1 & 0 & 1 & 1 & 0 & 0 & 0 & 0 & 0 & 1 & 0 & 1 & 0 \\
0 & 1 & 0 & 1 & 1 & 0 & 0 & 1 & 1 & 1 & 1 & 1 & 1 & 0 & 0 & 1 \\
0 & 0 & 0 & 0 & 1 & 0 & 0 & 0 & 1 & 1 & 1 & 0 & 1 & 1 & 1 & 1 \\
0 & 0 & 0 & 0 & 0 & 1 & 0 & 0 & 0 & 0 & 1 & 1 & 1 & 0 & 0 & 1 \\
0 & 1 & 1 & 0 & 0 & 1 & 1 & 0 & 0 & 0 & 0 & 0 & 0 & 0 & 1 & 1 \\
0 & 0 & 0 & 0 & 0 & 1 & 1 & 0 & 0 & 0 & 1 & 1 & 0 & 0 & 1 & 1 \\
0 & 1 & 0 & 1 & 0 & 1 & 1 & 1 & 0 & 1 & 0 & 0 & 0 & 1 & 1 & 1 \\
0 & 0 & 1 & 0 & 0 & 1 & 0 & 1 & 0 & 1 & 0 & 0 & 1 & 1 & 0 & 1 \\
1 & 1 & 0 & 1 & 1 & 1 & 1 & 1 & 0 & 0 & 0 & 1 & 0 & 0 & 0 & 0 \\
1 & 0 & 1 & 1 & 0 & 0 & 1 & 0 & 0 & 0 & 1 & 1 & 0 & 0 & 1 & 1 \\
0 & 0 & 1 & 1 & 1 & 0 & 1 & 0 & 1 & 1 & 1 & 0 & 0 & 1 & 0 & 1 \\
0 & 1 & 1 & 0 & 0 & 1 & 1 & 1 & 1 & 1 & 1 & 1 & 0 & 1 & 1 & 0 
\end{array} \right]\]
\begin{figure}[ht]
\centering
\includegraphics[scale=0.5]{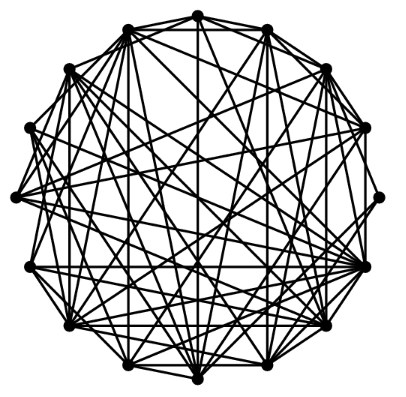}
\caption{A 3-symmetric graph of order 16}
\label{size16}
\end{figure}

We also found 3-symmetric graphs of order 17. Its adjacency matrix is shown below and its picture is shown in Figure~\ref{fig:size17}.

\[\left[ \begin{array}{@{}*{17}{c}@{}}
0 & 1 & 0 & 0 & 1 & 0 & 1 & 0 & 1 & 0 & 0 & 0 & 1 & 0 & 0 & 0 & 1 \\
1 & 0 & 0 & 0 & 0 & 1 & 0 & 0 & 0 & 0 & 1 & 1 & 0 & 1 & 0 & 1 & 0 \\
0 & 0 & 0 & 0 & 0 & 1 & 1 & 0 & 1 & 1 & 0 & 1 & 0 & 1 & 0 & 1 & 0 \\
0 & 0 & 0 & 0 & 1 & 0 & 1 & 0 & 0 & 1 & 1 & 0 & 0 & 1 & 0 & 0 & 1 \\
1 & 0 & 0 & 1 & 0 & 1 & 1 & 0 & 0 & 1 & 1 & 0 & 1 & 0 & 1 & 0 & 0 \\
0 & 1 & 1 & 0 & 1 & 0 & 1 & 0 & 0 & 0 & 1 & 1 & 0 & 0 & 0 & 1 & 1 \\
1 & 0 & 1 & 1 & 1 & 1 & 0 & 1 & 1 & 1 & 1 & 1 & 1 & 0 & 0 & 0 & 1 \\
0 & 0 & 0 & 0 & 0 & 0 & 1 & 0 & 0 & 1 & 1 & 1 & 0 & 1 & 1 & 1 & 1 \\
1 & 0 & 1 & 0 & 0 & 0 & 1 & 0 & 0 & 1 & 0 & 1 & 1 & 0 & 1 & 1 & 0 \\
0 & 0 & 1 & 1 & 1 & 0 & 1 & 1 & 1 & 0 & 1 & 0 & 1 & 0 & 0 & 1 & 0 \\
0 & 1 & 0 & 1 & 1 & 1 & 1 & 1 & 0 & 1 & 0 & 1 & 0 & 0 & 1 & 1 & 0 \\
0 & 1 & 1 & 0 & 0 & 1 & 1 & 1 & 1 & 0 & 1 & 0 & 1 & 0 & 0 & 1 & 1 \\
1 & 0 & 0 & 0 & 1 & 0 & 1 & 0 & 1 & 1 & 0 & 1 & 0 & 0 & 1 & 1 & 0 \\
0 & 1 & 1 & 1 & 0 & 0 & 0 & 1 & 0 & 0 & 0 & 0 & 0 & 0 & 0 & 1 & 0 \\
0 & 0 & 0 & 0 & 1 & 0 & 0 & 1 & 1 & 0 & 1 & 0 & 1 & 0 & 0 & 1 & 0 \\
0 & 1 & 1 & 0 & 0 & 1 & 0 & 1 & 1 & 1 & 1 & 1 & 1 & 1 & 1 & 0 & 1 \\
1 & 0 & 0 & 1 & 0 & 1 & 1 & 1 & 0 & 0 & 0 & 1 & 0 & 0 & 0 & 1 & 0 
\end{array} \right]\]

\begin{figure}[ht]
\begin{center}
\includegraphics[scale=0.5]{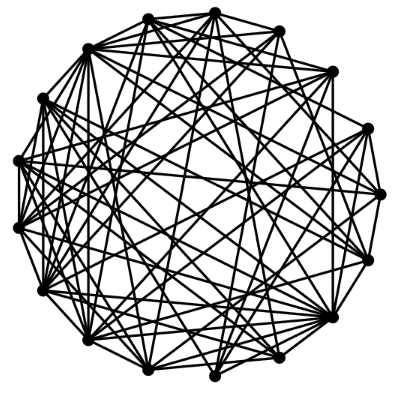} 
\end{center}
\caption{A 3-symmetric graph of order 17.}
\label{fig:size17}
\end{figure}

\subsection{Computational Results}

We randomly sampled a 2-symmetric graph of order 16 and checked whether it was 3-symmetric. This procedure allowed us to generate adjacency matrices for 3-symmetric graphs of order 16. Across 10000 trials, the probability that a random 2-symmetric graph is 3-symmetric was approximately $4.51\%$. Given that the number of 2-symmetric graphs of order 16 is  4648429222263945620900, the estimated number of 3-symmetric graphs of order 16 is $\approx 2.09\times 10^{20}$.

We would like to add that the number of 2-symmetric graphs of order 8 is 1646, and the number of 3-symmetric graphs os the same order is 74 \cite{P}. Thus, the percentage is about $4.5\%$.

We provide more statistics on 500 different 3-symmetric graphs of order 16 generated with the above procedure.

First we look for maximum clique sizes. On one hand, a 3-symmetric graph has to contain $K_3$. Thus, the maximum clique size cannot be less than 3. On the other hand, since a clique of order $9$ has $84$ triangles and a 3-symmetric graph of order $16$ has $70$ triangles, it follows that the maximum possible clique size is $8$. 
The following table shows maximum clique sizes in our sample.
\begin{center}
\begin{tabular}{|c|c|}
\hline Max Clique & Frequency
\\ \hline 4 & 41
\\ \hline 5 & 436
\\ \hline 6 & 23
\\ \hline
\end{tabular}
\end{center}

Similarly, we look at the maximum degrees of the graphs we found. Since the average degree in a 2-symmetric graphs of order 16, and thus 3-symmetric graphs of order 16, is $7.5$, the maximum degree is at least 8.  
The following table shows maximum degrees in our sample.
\begin{center}
\begin{tabular}{|c|c|}
\hline 
Max Degree & Frequency
\\ \hline 9 & 1
\\ \hline 10 & 115
\\ \hline 11 & 260
\\ \hline 12 & 109
\\ \hline 13 & 14
\\ \hline 14 & 1
\\ \hline
\end{tabular}
\end{center}
In particular, we only found one graph where the max degree is 9. The adjacency matrix is shown below.

\[\left[ \begin{array}{@{}*{16}{c}@{}}
    0 & 0 & 1 & 0 & 1 & 1 & 0 & 1 & 0 & 1 & 1 & 1 & 1 & 0 & 0 & 0 \\
    0 & 0 & 0 & 1 & 0 & 1 & 1 & 1 & 0 & 1 & 1 & 1 & 0 & 1 & 1 & 0 \\
    1 & 0 & 0 & 1 & 1 & 0 & 1 & 0 & 0 & 1 & 0 & 1 & 1 & 1 & 0 & 1 \\
    0 & 1 & 1 & 0 & 1 & 1 & 0 & 1 & 1 & 0 & 0 & 1 & 0 & 1 & 1 & 0 \\
    1 & 0 & 1 & 1 & 0 & 0 & 1 & 0 & 1 & 1 & 0 & 1 & 0 & 0 & 1 & 1 \\
    1 & 1 & 0 & 1 & 0 & 0 & 1 & 0 & 0 & 1 & 1 & 0 & 0 & 1 & 1 & 0 \\
    0 & 1 & 1 & 0 & 1 & 1 & 0 & 0 & 0 & 1 & 1 & 0 & 1 & 1 & 1 & 0 \\
    1 & 1 & 0 & 1 & 0 & 0 & 0 & 0 & 0 & 1 & 0 & 0 & 0 & 0 & 0 & 0 \\ 
    0 & 0 & 0 & 1 & 1 & 0 & 0 & 0 & 0 & 1 & 0 & 1 & 0 & 1 & 0 & 1 \\
    1 & 1 & 1 & 0 & 1 & 1 & 1 & 1 & 1 & 0 & 0 & 0 & 0 & 1 & 0 & 0 \\
    1 & 1 & 0 & 0 & 0 & 1 & 1 & 0 & 0 & 0 & 0 & 1 & 0 & 0 & 1 & 1 \\
    1 & 1 & 1 & 1 & 1 & 0 & 0 & 0 & 1 & 0 & 1 & 0 & 0 & 1 & 1 & 0 \\
    1 & 0 & 1 & 0 & 0 & 0 & 1 & 0 & 0 & 0 & 0 & 0 & 0 & 0 & 0 & 0 \\
    0 & 1 & 1 & 1 & 0 & 1 & 1 & 0 & 1 & 1 & 0 & 1 & 0 & 0 & 0 & 1 \\
    0 & 1 & 0 & 1 & 1 & 1 & 1 & 0 & 0 & 0 & 1 & 1 & 0 & 0 & 0 & 0 \\
    0 & 0 & 1 & 0 & 1 & 0 & 0 & 0 & 1 & 0 & 1 & 0 & 0 & 1 & 0 & 0 

\end{array} \right]\]

It is too computationally challenging to use this process  to find 4-symmetric graphs, as such graphs have order at least 256.

\section{Acknowledgements}

We are grateful to the MIT PRIMES program for giving us the opportunity to do this research. We are also grateful to David Perkinson for sharing his calculations with us and Yongyi Chen for reviewing the paper.

\end{document}